\numberwithin{equation}{section}
\newcommand{\beg}{\begin{equation}}
    \newcommand{\eeg}{\end{equation}}
\newcommand{\ben}{\begin{eqnarray*}}
    \newcommand{\een}{\end{eqnarray*}}
\newtheorem{thm}{Theorem}[section]
\newtheorem{cor}[thm]{Corollary}
\newtheorem{prop}[thm]{Proposition}
\numberwithin{equation}{section} \theoremstyle{definition}
\newtheorem{defn}[thm]{Definition}
\newtheorem{rem}[thm]{Remark}
\newtheorem{qn}[thm]{Question}
\newcommand{\n}{\lVert}
\makeatletter \@namedef{subjclassname@2020}{\textup{2020}
Mathematics Subject Classification} \makeatother
\begin{document}

\title[Unitary bases]{Unitary orthonormal bases of finite dimensional inclusions}

\author[Bakshi and Bhat]{Keshab Chandra Bakshi and B V Rajarama Bhat }

 \address[ Department of Mathematics and Statistics,
         Indian Institute of Technology, Kanpur,
         Uttar Pradesh 208016, India]{}
   \email{keshab@iitk.ac.in, bakshi209@gmail.com}

   \address[Statistics and Mathematics Unit, Indian Statistical Institute, R V College Post, Bangalore - 560059, India]{}
    \email{bvrajaramabhat@gmail.com}


 \keywords{matrix algebras, unitary orthonormal bases, Weyl
unitaries, sub-factors, relative entropy}

\subjclass[2020]{46L08, 46L7, 15A63, 15B34, 81P45}

\maketitle

\begin{abstract}

We study unitary orthonormal bases in the sense of Pimsner and Popa
for inclusions $(\mathcal{B}\subseteq \mathcal{A}, E),$ where
$\mathcal{A}, \mathcal{B}$ are  finite dimensional von Neumann
algebras and $E$ is a conditional expectation map from $\mathcal{A}$
onto $\mathcal{B}$. It is shown that existence of such bases
requires that the associated inclusion matrix satisfies a spectral
condition forcing dimension vectors to be Perron-Frobenius
eigenvectors and the conditional expectation map  preserves the
Markov trace. Subject to these conditions, explicit unitary
orthonormal bases are constructed if either one of the algebras is
abelian or simple. They  generalize complex Hadamard matrices, Weyl
unitary bases, and a recent work of Crann et al which correspond to
the special cases of $\mathcal{A}$ being abelian, simple, and
general multi-matrix algebras respectively with $\mathcal{B}$ being
the algebra of complex numbers. For the first time $\mathcal{B}$ is
more general. As an application of these results it is shown that if
$(\mathcal{B}\subseteq \mathcal{A}, E),$ admits a unitary
orthonormal basis then the Connes-St{\o}rmer relative entropy
$H(\mathcal{A}_1|\mathcal{A})$ equals the logarithm of the square of
the norm of the inclusion matrix, where $\mathcal{A}_1$ denotes  the
 Jones basic construction of the inclusion. As a further
application, we prove the existence of unitary orthonormal bases for
a large class of depth 2 subfactors with abelian relative commutant.
\end{abstract}

\section{Introduction}

If we consider $\mathbb{C}^n$ as an algebra as well as a Hilbert
space with respect to standard inner product (normalized so that the
identity has norm 1), and look at unitary orthonormal bases we end
up with complex Hadamard matrices. Unlike the real case, they exist
in all dimensions. Complex Hadamard matrices such as finite Fourier
matrices have made their appearance in several different contexts
and there is extensive literature on the same. In a similar vein,
the full matrix algebra $M_n$, considered as a Hilbert space by
imposing the Hilbert-Schmidt inner product coming from the
normalized tracial state, also admits a variety of unitary
orthonormal bases. For instance it is well-known that Weyl unitaries
arising out of a projective unitary representation of the group
$\mathbb{Z}_n\times \mathbb{Z}_n$, yields one such  basis of this
space. These bases have found a wide variety of applications. Of
particular interest are the applications in quantum information
theory (\cite{Wat}). Some of them are known as nice error bases (See
\cite{Kn}, \cite{Kr}, \cite{BCF}). They are useful in quantum
teleportation and dense coding schemes as described by Werner in
\cite{Wer}.

Recently Crann, Kribs, and Pereira \cite{CKP} constructed unitary
orthonormal bases for arbitrary finite dimensional von Neumann
algebras on fixing suitable states on them. Their work showed us the
way to go further. In this article we generalize several of these
constructions of unitary orthonormal bases to the much wider setting
of Pimsner-Popa bases and demonstrate some applications. Except for
this introduction and the last section, this article deals with only
finite dimensional algebras. Whether we call them as von Neumann
algebras or $C^*$-algebras won't make a difference.  In the final
section we discuss applications of the theory developed to
subfactors. The applications to quantum information theory and other
fields remains to be explored. The beautiful combinatorial
structures and symmetries seen in these unitary bases make it
apparent that these constructions are likely to have further
applications. Already Conlon et al \cite{CCKL} have proposed some
uses of unitary orthonormal bases on general multi-matrix algebras
in quantum teleportation theory.

The theory of orthogonal bases developed by Pimsner and Popa
\cite{PP} considers bases  for von Neumann subalgebra-algebra pairs
with a chosen conditional expectation map. They are really
orthogonal bases for Hilbert von Neumann modules rather than Hilbert
spaces. Given a subfactor with finite Jones index (see \cite{Jo}),
Pimsner and Popa  proved that there always exists an orthonormal
basis corresponding to the unique trace-preserving conditional
expectation (see \cite{PP}). Subsequently, the concept of
Pimsner-Popa bases became an indispensable tool in subfactor theory.
It has been crucially used in the description of  planar algebras of
Jones which are regarded as  extremely important invariants for
subfactors. Motivated by  Pimsner-Popa bases, Watatani developed an
algebraic theory of index for a unital  inclusion of $C^*$-algebras
in \cite{W}. One can show that there always exists a Pimsner-Popa
basis for an inclusion of multi-matrix algebras with respect to any
faithful trace. Inclusions of finite dimensional von Neumann
algebras play a central role in the theory of subfactors. Indeed, a
certain grid of finite dimensional von Neumann-algebras defines the
so-called {\em `standard invariant'\/} of a subfactor which is a
complete invariant for a {\em `good class'\/} of subfactor with
finite Jones index. Furthermore, from the early days of subfactor
theory certain quadruples of finite dimensional von Neumann
algebras, called {\em commuting squares\/}  have been used to
construct a large class of hyperfinite subfactors. Thus, the study
of inclusion of finite dimensional von Neumann algebras is of
paramount importance in subfactor theory. Pimsner-Popa orthonormal
bases consisting of unitaries (for subfactors, Cartan MASAs, finite
dimensional inclusions etc.) arise naturally in von Neumann algebra
theory (see \cite{P,pop2}, for instance) and it has many
applications. One of the motivation of this paper is the following
open problem asked by Popa (see \cite{pop2}):
 \begin{qn} (Popa)
{\em Does there exist a unitary orthonormal basis for an integer
index irreducible subfactor?}
\end{qn}
Recently the first author and Gupta  have showed that any finite
index regular subfactor $N\subset M$ with $N^{\prime}\cap M$ is
either commutative or simple admits a unitary orthonormal basis (see
\cite{BG}). The crucial fact we used in the proof is the existence
of the unitary orthonormal basis for an inclusion
$\mathbb{C}\subseteq \mathcal{A}$, where $\mathcal{A}$ is either
simple or abelian. In \cite{BG}, it was conjectured that any regular
subfactor will have unitary orthonormal basis. This has been
answered positively in \cite{CKP} by proving that any inclusion of
finite dimensional $C^*$-algebras $\mathbb{C}\subseteq \mathcal{A}$
has unitary orthonormal basis with respect to the unique Markov
trace. It is natural to attempt to generalize this result for  more
general multi-matrix algebras. In this article we have two new
constructions of unitary orthonormal bases, one extending that of
\cite{CKP} in Section 5 and another extending Weyl unitaries in
Section 6.

 Consider a triple $(\mathcal{B} \subseteq \mathcal{A}, E)$ where
$\mathcal{A}, \mathcal{B}$ are finite dimensional von Neumann
algebras,  $\mathcal{B} \subset \mathcal{A}$ is a unital inclusion
and $E:\mathcal{A}\to \mathcal{B}$ is a conditional expectation map.
We call such triples as subalgebra systems. In  \textit{Section 2}
we  recall some basic facts and  the definition of orthonormal bases
for subalgebra systems.  In \textit{Section 3}, we derive necessary
conditions for the existence of unitary orthonormal basis. The
following theorem summarizes our results on this.
 \medskip

\textbf{Theorem A}: {\em  Let $(\mathcal{B} \subseteq  \mathcal{A},
E)$ be an inclusion of finite dimensional von Neumann algebras, with
$A\tilde{m}=\tilde{n},$ where $A$ is the inclusion matrix,
$\tilde{m}, \tilde{n}$ are dimension vectors as in
(\Cref{dimension}). Suppose $(\mathcal{A}, \mathcal{B}, E)$ admits a
unitary orthonormal basis with $d$-unitaries. Then
\begin{equation}\label{Spectral Condition}  A^t\tilde{n}= d\tilde{m}.\end{equation} Consequently,
$$A^tA\tilde{m}=d\tilde{m}; ~~AA^t\tilde{n}=d\tilde{n}.$$
In particular, $\tilde{m}, \tilde{n}$ are Perron-Frobenius
eigenvectors of $A^tA$ and $AA^t$ respectively with eigenvalue $d$.
Moreover $E$ is the unique conditional expectation preserving the
Markov trace with trace vector equal to the dimension vector
$\tilde{n}.$ }

\medskip

We call the conditions imposed on the inclusion matrix $A$ by this
theorem as the {\em spectral condition\/} and the condition on $E$
as the {\em trace condition\/.} We suspect that these conditions
could also be sufficient for the existence of unitary orthonormal
basis. Currently we are unable to prove this. However, we are able
to build unitary orthonormal bases for large classes of subalgebra
systems.

In \textit{Section 4}, we develop some basic notations and write
down various formulae for conditional expectation maps on
multi-matrix algebras. If a conditional expectation preserves the
standard trace then it is  a mixed unitary channel, and in such
cases we write them explicitly as convex combinations of a certain
number of unitary channels.  These concrete  descriptions of
conditional expectation maps help us to verify the orthogonality of
unitaries of various bases we construct in following sections.

In \textit{Section 5} we exhibit a unitary orthonormal basis for an
inclusion of multi-matrix algebras $\mathcal{B\subset A}$ with
either $\mathcal{B}$ or $\mathcal{A}$ abelian. This construction
makes heavy use of quasi-circulant matrices. It is inspired by the
recent work of Crann, Kribs, and Pereira (\cite{CKP}), who handled
the case of $\mathcal{B}=\mathbb{C}.$ The final answer can be seen
in Theorem \ref{First construction}. As a consequence we have the
following theorem.

\medskip

\textbf{Theorem B:} {\em Suppose the subalgebra system
$(\mathcal{B}\subseteq \mathcal{A},E)$ satisfies the spectral
condition and  $\mathcal{B}$
 is abelian. Then it has a unitary orthonormal basis.}

\medskip

It is a well-known fact that the subalgebra  system $(\mathbb{C}\subseteq M_n,
\frac{1}{n}\mbox{tr})$  admits a unitary orthonormal basis. A standard basis called Weyl unitaries consists of a
family of the form $\{ V^jU^k:0\leq j,k\leq (n-1)\}$ where $V$ is a
cyclic shift and $U$ is a diagonal unitary with roots of unity on
the diagonal. Generalizing this construction, in \textit{Section 6},
  we exhibit a unitary basis where we
replace $\mathbb{C}$ by a general finite dimensional abelian algebra
and $M_n$ is replaced by a direct sum of copies of $M_n$ with
inclusion satisfying the spectral conditions necessitated by Theorem
A.

 In \textit{Section 7}, we describe various methods of getting
 unitary orthonormal bases for new subalgebra systems constructed
 out of subalgebra systems already having this property. The methods
 we have are concatenation, taking tensor products or direct sums and the
 basic construction of Jones. These devices are good enough to prove
 the following.

\medskip

\textbf{Theorem C:} {\em Let $(\mathcal{B}\subseteq \mathcal{A}, E)$
be a subalgebra systems of  finite dimensional $C^*$-algebras.
\begin{itemize}
\item[(i)]  If $\mathcal{B}=M_{m}$ with $E$ preserving the Markov trace then
$(\mathcal{B}\subseteq \mathcal{A}, E)$ has unitary orthonormal basis.

\item [(ii)]  If $\mathcal{A}=M_{n}$ with $E$
preserving the unique trace on $M_{n}$ then  $(\mathcal{B}\subseteq
\mathcal{A}, E)$ has unitary orthonormal basis.
\end{itemize}}

 \medskip

 In the concluding \textit{Section 8}, we provide
two applications. As a first application, we show that if $N\subset
M$ is a depth 2 subfactor with $N^{\prime}\cap M$ abelian and
$N^{\prime}\cap M\subset N^{\prime}\cap M_1$ is superextremal (see
\cite{L}),  then the trace preserving conditional expectation from $M$ onto $N$ has
unitary orthonormal basis. To achieve this we crucially use the
spectral condition as in Theorem A.

For the  second and final application, we consider an inclusion of
finite dimensional von Neumann algebras $\mathcal{B\subset A}$ with
an inclusion matrix $A$ and suppose $\mathcal{A}_1$ is the Jones
basic construction (see \cite{Jo})  corresponding to the Markov
trace $\tau$. If the unique $\tau$-preserving conditional
expectation has a unitary orthonormal basis, then we prove that
$H(\mathcal{A}_1|\mathcal{A})=\ln {\lVert A \rVert}^2, $ where $H$
denotes the Connes-St{\o}rmer relative entropy (see \cite{CS}). To
prove this we have  used the spectral condition and Theorem B.

\section{Preliminaries}

Our basic set up will be a triple $(\mathcal{B}\subseteq
\mathcal{A}, E)$ where $\mathcal{A}, \mathcal{B}$ are finite
dimensional von Neumann algebras with $\mathcal{B} $ being a unital
 subalgebra of $\mathcal{A}$ and $E:\mathcal{A}\to \mathcal{B}$ is
a conditional expectation map, that is, $E$ is a unital completely
positive map satisfying $E(Y)=Y, ~~\forall Y\in B$. We will call
such a triple as a sub-algebra system. It is well-known that any
such map $E$ has the {\em `bimodule property'\/}: $$E(YXZ)=YE(X)Z,
~~\forall Y,Z\in \mathcal{B}, X\in \mathcal{A}.$$ Given a faithful
trace $\varphi $ on $\mathcal{A}$ there exists a unique conditional
expectation map $E_{\varphi }$ on $\mathcal{A}$ preserving $\varphi
.$ For further information on conditional expectation maps we refer
to \cite{U}.

\begin{defn} Consider a subalgebra system  $(\mathcal{B}\subseteq \mathcal{A}, E)$ as above. A family $\{W_j:0\leq
j\leq (d-1)\}$ (for some $d\in \mathbb{N}$) of elements of
$\mathcal{A}$ is called a \textit{ (right) Pimsner-Popa basis} for
$(\mathcal{B}\subseteq \mathcal{A}, E)$ if
\begin{equation}\label{right basis} X= \sum _{j=0}^{d-1}W_jE(W_j^*X),
~~\forall ~X\in \mathcal{A}.\end{equation} It is said to be an
\textit{ unitary orthonormal basis} if $W_j$ is a unitary for every
$j$, and
\begin{equation}\label{orthonormality} E(W_j^*W_k)= \delta
_{jk},~~\forall ~0\leq j,k\leq (d-1).\end{equation}
\end{defn}
 The word `right' in the nomenclature is suppressed as we will not
 be dealing with any other kind of bases. It is known that all finite dimensional subalgebra systems admit
Pimsner-Popa bases. However, as we shall see, existence of unitary
orthonormal bases is not always guaranteed. So it is useful to have
the following definition \cite{S}.

\begin{defn} A subalgebra system
$(\mathcal{B}\subseteq \mathcal{A}, E)$ is said to have
\textit{$U$-property} if it admits a unitary orthonormal basis.
\end{defn}
Now we introduce our standard setting and  develop some notation.
Throughout we will take
\begin{equation}\label{decompositionA}
\mathcal{A}= M_{n_0}\oplus M_{n_1}\oplus \cdots \oplus M_{n_{s-1}};
\end{equation}
\begin{equation}\label{decompositionB}
\mathcal{B}= M_{m_0}\oplus M_{m_1}\oplus \cdots \oplus
M_{m_{r-1}}.\end{equation}
 The {\em dimension vectors\/} of $\mathcal{A}$,
$\mathcal{B}$ are given by
\begin{equation}\label{dimension}
\tilde{n} = \left(\begin{array}{c}
n_0\\n_1\\\vdots
\\n_{s-1}\end{array}\right),
~~\tilde{m}= \left(\begin{array}{c} m_0\\m_1\\\vdots
\\m_{r-1}\end{array}\right)
\end{equation}
 respectively. The {\em inclusion matrix\/}  of $\mathcal{B}$ in
$\mathcal{A}$ is given by an $s\times r$ matrix:
$$A=[a_{ij}]_{0\leq i\leq (s-1); 0\leq j\leq (r-1).} $$
where the algebra $M_{m_j}$  of $\mathcal{B}$ appears $a_{ij}$ times
in the algebra $M_{n_i}$ of $\mathcal{A}.$ Any inclusion of finite
dimensional von Neumann algebras $\mathcal{B}\subseteq \mathcal{A}$
is completely determined up to isomorphism by this triple  $(A,
\tilde{m}, \tilde{n})$ of one inclusion matrix and two dimension
vectors. Note that by dimension counting,
\begin{equation}
\sum _{j=0}^{r-1} a_{ij}m_j= n_i, ~~\forall ~0\leq i\leq (s-1)
\end{equation}
or in matrix  notation:
\begin{equation}
\tilde{n}= A\tilde{m}.\end{equation}

Any faithful tracial state $\varphi $ on $\mathcal{A}$ is given by
 \begin{equation}\label{tracial state}
 \varphi(\oplus _{i=0}^{s-1}X_i)=\frac{1}{\sum _{i=0}^{s-1}p_in_i} \sum _{i=0}^{s-1}p_i
 ~\mbox{trace}(X_i),~~\oplus _{i=0}^{s-1}X_i\in
 \mathcal{A},\end{equation}
for some scalars $p_i>0$ with $\sum _{i=0}^{s-1}p_i=1.$ We call the
column vector $\tilde{p}=(p_0, p_1, \ldots , p_{s-1})^t $ as a {\em
trace vector\/} of $\varphi ,$ which is uniquely determined from
$\varphi $ up to multiplication by a positive scalar.  Any such
state $\varphi $ is said to be a {\em Markov state} if
\begin{equation}\label{Markov}
AA^t\tilde{p}= \|A\|^2\tilde{p}.
\end{equation}
This means that $\tilde{p}$ is a Perron-Frobenius eigenvector for
the non-negative matrix $AA^t.$

From Crann et. al. \cite{CKP} we know that the sub-algebra system
$(\mathbb{C}\subseteq \mathcal{A}, E)$  admits a unitary basis, if
the conditional expectation $E$ is given by
\begin{equation}\label{state}
E(X)= \varphi (X)I, ~~\forall ~X\in \mathcal{A},\end{equation}
 with $\varphi $
being the tracial state with trace vector $\tilde{n}$, that is
\begin{equation}\label{the state}
\varphi (\oplus X_i)= \frac{1}{\sum _{i=0}^{s-1}n_i^2} \sum
_{i=0}^{s-1}n_i~\mbox {trace}~ (X_i), ~~X_i \in M_{n_i}, 1\leq i\leq
(s-1).\end{equation} Note that in this case, the inclusion matrix is
a column matrix, $A=[n_0, \ldots , n_{s-1}]^t$ and $\varphi $ is a
Markov state.

\begin{rem}
 If for a subalgebra system $(\mathcal{B\subset A},E)$ with inclusion
matrix $A$, if $d:={\lVert A \rVert}^2$ is an integer and there are
$d$ many unitaries satisfying the orthonormality condition
(\ref{orthonormality}), then they form a basis, that is, (\ref{right
basis}) is automatic (See \cite{Bak} [Theorem 2.2]).
\end{rem}

\section{Markov state preservation and the spectral condition}

In this section we will see various properties of inclusions of
finite dimensional algebras having unitary orthonormal basis. In
other words we are deriving some necessary conditions for subalgebra
systems to have $U$-property.

We wish to show that $E$ preserves the tracial state with trace
vector equal to the dimension vector $\tilde{n}$ of $\mathcal{A}.$
Later we will see that this is actually the Markov state for the
inclusion, when the inclusion  has $U$-property. First we work out a
special case. Here we observe that the construction of unitary
orthonormal basis in \cite{CKP} required a Markov state is not a
coincidence. It is inescapable.

\begin{prop}\label{trace}
Suppose $(\mathbb{C}\subseteq \mathcal{A}, E)$ has $U$-property
where
$$\mathcal{A}= M_{n_0}\oplus M_{n_1}\oplus \cdots \oplus
M_{n_{s-1}}.$$ Then $E$ is given by \Cref{state} and \Cref{the state}.
\end{prop}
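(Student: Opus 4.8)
The plan is to reduce everything to identifying one state and then pin that state down by a single trace computation on the GNS space. Since $\mathcal B=\mathbb{C}$, the conditional expectation $E$ has range $\mathbb{C}I$, so $E(X)=\varphi(X)I$ for some state $\varphi$ on $\mathcal A$, and the Proposition is exactly the assertion that $\varphi$ equals the functional in \eqref{the state}. First I would record that $\varphi$ is automatically faithful: if $\varphi(Z^*Z)=0$, then Cauchy--Schwarz gives $\varphi(W_j^*Z)=0$ for all $j$, whence $Z=\sum_j W_j\varphi(W_j^*Z)=0$ by \eqref{right basis}. I would deliberately avoid the naive route of first proving $\varphi$ tracial and then computing its block weights separately; a priori $\varphi$ is only a state, and that two-stage analysis is where I expect the bookkeeping to get unpleasant.

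Instead, form the finite-dimensional Hilbert space $H=L^2(\mathcal A,\varphi)$, namely $\mathcal A$ equipped with $\langle X,Y\rangle=\varphi(X^*Y)$. By \eqref{orthonormality} the family $\{W_j\}_{j=0}^{d-1}$ is orthonormal in $H$, and by \eqref{right basis} it reconstructs every element, so it is a complete orthonormal basis of $H$; in particular $d=\dim_{\mathbb{C}}\mathcal A=\sum_i n_i^2$. For an arbitrary $a=\oplus_i a_i\in\mathcal A$ consider the right-multiplication operator $R_a\colon H\to H$, $R_a(X)=Xa$. The whole proof rests on computing its trace $\mathrm{Tr}_H(R_a)$ in two different ways.

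Using the orthonormal basis, and crucially the unitarity $W_j^*W_j=I$,
\[
\mathrm{Tr}_H(R_a)=\sum_{j=0}^{d-1}\langle W_j,R_aW_j\rangle=\sum_{j=0}^{d-1}\varphi(W_j^*W_ja)=d\,\varphi(a).
\]
On the other hand $\mathrm{Tr}_H(R_a)$ is the trace of $R_a$ as a linear endomorphism of the vector space $\mathcal A$, which is independent of the inner product. Since $R_a$ preserves each block and right multiplication by $a_i$ on $M_{n_i}$ has trace $n_i\,\mathrm{trace}(a_i)$, one gets $\mathrm{Tr}_H(R_a)=\sum_i n_i\,\mathrm{trace}(a_i)$. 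Equating the two expressions and using $\varphi(I)=1$ (which recovers $d=\sum_i n_i^2$) yields
\[
\varphi(a)=\frac{1}{\sum_k n_k^2}\sum_i n_i\,\mathrm{trace}(a_i)\qquad\text{for all }a\in\mathcal A,
\]
which is precisely \eqref{the state}; hence $E$ is given by \eqref{state}, and traciality of $\varphi$ falls out for free.

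The one step that genuinely needs justification is the elementary identity that right multiplication by $a_i$ on $M_{n_i}$ has trace $n_i\,\mathrm{trace}(a_i)$ --- equivalently, in the identification $M_{n_i}\cong\mathbb{C}^{n_i}\otimes(\mathbb{C}^{n_i})^{*}$ this operator is $I_{n_i}\otimes a_i^{t}$ --- which I would verify directly on matrix units. It is also worth flagging that the argument works precisely because I use right and not left multiplication: it is exactly $W_j^*W_j=I$ that collapses the basis-side sum to $d\,\varphi(a)$, whereas $L_a$ would leave the intractable sum $\sum_j\varphi(W_j^*aW_j)$.
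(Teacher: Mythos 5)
Your proof is correct, and it takes a genuinely different route from the paper's. The paper argues via the density matrix: it writes $E(X)=\mathrm{trace}(\rho X)\,1$ with $\rho=\oplus_i\rho_i$, takes spectral decompositions $\rho_i=\sum_a p_{ia}|v_{ia}\rangle\langle v_{ia}|$, and observes that orthonormality of the $W_j$ makes the vectors $w_j=(\sqrt{p_{ia}}\,W_{ji}v_{ia})_{i,a}\in\mathbb{C}^d$ the columns of a $d\times d$ unitary matrix; since the rows of that matrix then also have unit norm, summing squared entries over the $n_i$ rows labelled by a fixed $i$ and using unitarity of each block $W_{ji}$ gives $n_i=d\,p_{ia}$, i.e.\ $\rho_i=\frac{n_i}{d}I_i$. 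Your argument replaces this column-versus-row duality with a trace double count: $\mathrm{Tr}(R_a)$ computed through the orthonormal basis $\{W_j\}$ (where $W_j^*W_j=I$ collapses everything to $d\,\varphi(a)$) versus computed algebraically blockwise (giving $\sum_i n_i\,\mathrm{trace}(a_i)$, verified on matrix units). Both proofs consume exactly the same two hypotheses, orthonormality plus unitarity, but yours is shorter, avoids the spectral-decomposition bookkeeping, and delivers traciality of $\varphi$ as a byproduct rather than as a separate target, while the paper's has the minor virtue of exhibiting the density matrix $\rho$ explicitly. One streamlining remark: your faithfulness step is needed only to justify treating $\varphi(X^*Y)$ as an honest inner product on $H$; you could skip it entirely by staying algebraic, since the Pimsner--Popa expansion $X=\sum_k W_k\varphi(W_k^*X)$ already identifies the coefficient of $W_j$ in $R_aW_j$ as $\varphi(W_j^*W_ja)=\varphi(a)$, whence the vector-space trace of $R_a$ equals $d\,\varphi(a)$ with no positivity or nondegeneracy required.
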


\begin{proof}
 Suppose $\{W_0, W_1, \ldots , W_{d-1}\}$ is a unitary o.n.b. for
 the triple $(\mathbb{C}\subseteq \mathcal{A}, E).$  Clearly we  have $d=n_0^2+n_1^2+\cdots +n_{s-1}^2$, as that is the dimension of
 $\mathcal{A}.$  By general theory,
 $$E(X) = \mbox{trace}(\rho X).1, ~~\forall ~X\in \mathcal{A}$$
 for some density matrix,
 $$\rho = \oplus _{i=0}^{s-1}\rho _i$$
 where $\rho _i$ is a positive matrix in $M_{n_i}, ~1\leq i\leq
 (s-1).$ Let
 $$\rho _i=\sum _{a=0}^{n_i-1}p_{ia}|v_{ia}\rangle \langle v_{ia}|,
 $$
 be a spectral decomposition of $\rho _i$ for $0\leq i\leq (s-1),$
 so that $\{ v_{ia}: 0\leq i\leq (s-1); 0\leq a\leq (n_i-1)\}$ is an
 orthonormal basis of $\mathbb{C}^{n_0}\oplus \mathbb{C}^{n_1}\oplus
 \cdots \oplus \mathbb{C}^{n_{s-1}},$ and $\sum _{i,a}p_{ia}=1.$  Let
 $$W_j=W_{j0}\oplus W_{j1}\oplus \cdots \oplus W_{j(s-1)}$$
 be the decomposition of $W_j$ in $\mathcal{A}=
 \mathcal{M}_{n_0}\oplus \mathcal{M}_{n_1}\oplus \cdots \oplus
 \mathcal{M}_{n_{s-1}}$ for $0\leq j\leq (d-1).$ From the orthonormality
 of the basis we have,
 \begin{eqnarray*}
 \delta _{jk}&=& \mbox{trace}~(\rho W_j^*W_k)\\
&=& \sum _{i=0}^{s-1} ~\mbox{trace}~(\rho _iW_{ji}^*W_{ki})\\
&=& \sum _{i=0}^{s-1}\sum
_{a=0}^{n_i-1}p_{ia}~\mbox{trace}~(|v_{ia}\rangle \langle
v_{ia}|W_{ji}^*W_{ki})\\
&=& \sum _{i=0}^{s-1}\sum _{a=0}^{n_i-1}p_{ia}\langle W_{ji}v_{ia},
W_{ki}v_{ia}\rangle .
\end{eqnarray*}
That is,  \begin{equation}\label{orthogonality} \delta _{jk}= \sum
_{i=0}^{s-1}\sum _{a=0}^{n_i-1}\langle \sqrt{p_{ia}}W_{ji}v_{ia},
\sqrt{p_{ia}}W_{ki}v_{ia}\rangle .
\end{equation}
For fixed $j$, $\sqrt{p_{ia}}W_{ji}v_{ia}$ is a vector in
$\mathbb{C}^{n_i}.$ Therefore, taking
$$w_j:= ( \sqrt{p_{ia}}W_{ji}v_{ia})_{ 0\leq i\leq (s-1); 0\leq a\leq
(n_i-1)}$$ we have a vector in $\mathbb{C}^{n_0^2+n_1^2+\cdots
+n_{s-1}^2}= \mathbb{C}^{d}$ and by \Cref{orthogonality} these
vectors are orthonormal as we vary $j$. Taking them as column
vectors, we get a $d\times d$ unitary matrix
$$W:=[ w_0, w_1, \ldots , w_{d-1}].$$
Therefore, the $l^2$-norm of every row of $W$ is one. On the other
hand, the sum of $l^2$-norms of the first $n_0$-rows of $W$ is given
by
$$\sum _{j=0}^{d-1}\|
\sqrt{p_{00}}W_{j0}v_{00}\|^2 = \sum _{j=0}^{d-1}p_{00}.1=
p_{00}.d.$$ This gives $n_0=dp_{00}$ or $p_{00}=\frac{n_0}{d}.$ This
we got by taking $(i,a)=(0,0)$ and adding over $j$. Similarly, for
any fixed $(i,a)$,
$$n_i= \sum _{j=0}^{d-1}\|\sqrt{p_{ia}}W_{ji}v_{ia}\|^2= \sum
_{j=0}^{d-1}p_{ia}.1= d.p_{ia}.$$ This shows that
$$p_{ia}=\frac{n_i}{d}.$$
In other words,
$$\rho _i= \sum _{a=0}^{n_i-1}p_{ia}|v_{ia}\rangle \langle v_{ia}|=
\sum _{a=0}^{n_i-1}\frac{n_i}{d}|v_{ia}\rangle \langle v_{ia}|=
\frac{n_i}{d}.I_i$$ where $I_i$ is the identity in the appropriate
space. Hence,
$$ \rho = \oplus _i(\frac{n_i}{d}).I_i.$$ So
$\varphi (\oplus _iX_i)= \frac{1}{d}\sum
_{i=0}^{s-1}n_i~\mbox{trace}~(X_i).$
\end{proof}

Now we look at the general case. We need the following tool, which
says that unitary orthonormal bases may be obtained by concatenation.

\begin{prop} \label{con}Suppose $(\mathcal{A}_1\subseteq \mathcal{A}_0, E_0)$ and $(\mathcal{A}_2\subseteq \mathcal{A}_1, E_1)$ are two
subalgebra systems having $U$-property. Then
$(\mathcal{A}_2\subseteq \mathcal{A}_0, E_1E_0)$ has $U$-property.
Indeed, if $\{V_j: j\in J\}$ are $\{W_k: k\in K\}$ are unitary
orthonormal basis for $(\mathcal{A}_1\subseteq \mathcal{A}_0, E_0)$
and $(\mathcal{A}_2\subseteq \mathcal{A}_1, E_1)$ respectively, then
$\{V_jW_k: j\in J, k\in K\}$ is an unitary orthonormal basis for
$(\mathcal{A}_2\subseteq \mathcal{A}_0, E_1E_0)$.
\end{prop}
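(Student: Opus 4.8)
The plan is to verify that the product family $\{V_j W_k : j \in J,\, k \in K\}$ satisfies the two defining conditions \Cref{orthonormality} and \Cref{right basis} for the composite system $(\mathcal{A}_2 \subseteq \mathcal{A}_0,\, E_1 E_0)$. Three preliminary points make the argument go. First, $E_1 E_0$ is a conditional expectation of $\mathcal{A}_0$ onto $\mathcal{A}_2$, being a composite of unital completely positive maps that restrict to the identity on $\mathcal{A}_2$; this is routine and is already presupposed by the framing of the statement. Second, unitarity is immediate: each $W_k$ lies in $\mathcal{A}_1 \subseteq \mathcal{A}_0$, so $V_j W_k$ is a product of two unitaries of $\mathcal{A}_0$ and hence unitary. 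Third, the entire computation rests on a single structural observation: since $W_k, W_{k'} \in \mathcal{A}_1$ and $\mathcal{A}_1$ is exactly the range of $E_0$, the bimodule property of $E_0$ lets me move the $W$'s outside $E_0$ as coefficients.

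For orthonormality I would compute $(E_1 E_0)\big((V_j W_k)^* (V_{j'} W_{k'})\big) = E_1\big(E_0(W_k^* V_j^* V_{j'} W_{k'})\big)$. Pulling $W_k^*$ and $W_{k'}$ out of $E_0$ by the bimodule property gives $E_1\big(W_k^* E_0(V_j^* V_{j'}) W_{k'}\big)$, and the orthonormality of $\{V_j\}$ collapses $E_0(V_j^* V_{j'})$ to $\delta_{jj'}$ times the identity. What remains is $\delta_{jj'}\, E_1(W_k^* W_{k'}) = \delta_{jj'}\delta_{kk'}$ by the orthonormality of $\{W_k\}$, which is precisely \Cref{orthonormality} for the composite family.

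For the reconstruction identity I would start from the $\{V_j\}$-expansion $X = \sum_j V_j E_0(V_j^* X)$, valid for every $X \in \mathcal{A}_0$. Because each coefficient $E_0(V_j^* X)$ lies in $\mathcal{A}_1$, I can apply the $\{W_k\}$-expansion to it and obtain $E_0(V_j^* X) = \sum_k W_k\, E_1\big(W_k^* E_0(V_j^* X)\big)$. The one step requiring any thought is the identification of the scalar coefficient $E_1\big(W_k^* E_0(V_j^* X)\big)$ with $(E_1 E_0)\big((V_j W_k)^* X\big)$; this follows once more from the bimodule property of $E_0$, which gives $E_0(W_k^* V_j^* X) = W_k^* E_0(V_j^* X)$ since $W_k^* \in \mathcal{A}_1$. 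Substituting back yields $X = \sum_{j,k} V_j W_k\, (E_1 E_0)\big((V_j W_k)^* X\big)$, which is \Cref{right basis}. I do not expect a genuine obstacle: the proposition is essentially a bookkeeping argument, and the only thing to get consistently right is the use of the bimodule property to pass elements of the intermediate algebra $\mathcal{A}_1$ in and out of $E_0$.
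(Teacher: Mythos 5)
Your proposal is correct and is precisely the argument the paper intends: the paper's proof is the one-line remark that the result ``follows by direct computation using the bi-module property of the conditional expectation $E_0$,'' and your write-up is exactly that computation carried out in full (unitarity, orthonormality, and the reconstruction identity, each hinging on passing elements of $\mathcal{A}_1$ through $E_0$). No gaps; nothing further needed.
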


\begin{proof} This follows by direct computation using the bi-module property of the conditional expectation $E_0.$
\end{proof}

\begin{thm}\label{Markov trace}
Let $(\mathcal{B}\subseteq \mathcal{A}, E)$ be an inclusion of
finite dimensional von Neumann algebras having $U$-property. Suppose
$\mathcal{A}= M_{n_0}\oplus M_{n_1}\oplus \cdots \oplus
M_{n_{s-1}}$. Then $E$  is the unique conditional expectation
preserving the tracial state $\varphi $ given by \Cref{the state}.
\end{thm}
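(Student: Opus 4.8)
The plan is to reduce the general case $(\mathcal{B}\subseteq\mathcal{A},E)$ to the already-established special case $(\mathbb{C}\subseteq\mathcal{A},E_\varphi)$ handled in \Cref{trace}. The key observation is that the trace-preservation property of $E$ should be detectable by composing $E$ with a further conditional expectation down to the scalars. Since $\mathcal{B}$ is a finite dimensional von Neumann algebra, it itself admits a faithful trace, and I would like to produce a conditional expectation $F:\mathcal{B}\to\mathbb{C}$ so that the composite $F\circ E:\mathcal{A}\to\mathbb{C}$ falls under the scope of \Cref{trace}. The tool that makes this precise is the concatenation result, \Cref{con}: if $(\mathcal{B}\subseteq\mathcal{A},E)$ has $U$-property and I can exhibit a second system $(\mathbb{C}\subseteq\mathcal{B},F)$ with $U$-property, then $(\mathbb{C}\subseteq\mathcal{A},F\circ E)$ inherits $U$-property, and \Cref{trace} then forces $F\circ E$ to be the specific tracial state on $\mathcal{A}$ with trace vector $\tilde{n}$.

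First I would invoke the result (cited in the excerpt from \cite{CKP}, and reproved here) that the inclusion $(\mathbb{C}\subseteq\mathcal{B},F)$ has a unitary orthonormal basis precisely when $F$ is the canonical tracial state on $\mathcal{B}$; choosing $F$ to be exactly that Markov state on $\mathcal{B}$ gives a system with $U$-property. Then \Cref{con} delivers that $(\mathbb{C}\subseteq\mathcal{A},F\circ E)$ has $U$-property, and \Cref{trace} pins down $F\circ E$ as the state $\varphi(\oplus_iX_i)=\frac{1}{d}\sum_{i=0}^{s-1}n_i\,\mathrm{trace}(X_i)$ with $d=\sum_i n_i^2$. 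The remaining work is to extract from the composite identity $F\circ E=\varphi$ the stronger statement that $E$ itself preserves $\varphi$, i.e. that $\varphi\circ E=\varphi$ as states on $\mathcal{A}$. Because $F$ is faithful on $\mathcal{B}$ and $E$ restricts to the identity on $\mathcal{B}$ (the bimodule property), the equation $F(E(X))=\varphi(X)$ together with $\varphi|_{\mathcal{B}}=F$ should let me peel off $F$ and conclude that $E$ carries the $\varphi$ on $\mathcal{A}$ to the $\varphi|_{\mathcal{B}}$ on $\mathcal{B}$ compatibly, whence $E=E_\varphi$ by uniqueness of the trace-preserving conditional expectation.

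The step I expect to be the main obstacle is the extraction argument: going from the scalar-valued identity $\varphi\circ E=F\circ E=\varphi$ back to the operator-valued statement that $E$ is the $\varphi$-preserving conditional expectation $E_\varphi$. A priori $F\circ E=F\circ E_\varphi$ only says that $E$ and $E_\varphi$ agree after the further averaging by $F$, which could in principle lose information. The way I would close this gap is to note that the relevant trace vector of $\varphi$ restricted to $\mathcal{B}$ is forced by the computation, and then apply the \emph{uniqueness} clause: there is a unique conditional expectation $\mathcal{A}\to\mathcal{B}$ preserving the trace $\varphi$ on $\mathcal{A}$, so it suffices to check that $\varphi\circ E=\varphi$. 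This last equality is exactly what the concatenation plus \Cref{trace} yields once I observe $\varphi|_{\mathcal{B}}=F$, since then $\varphi\circ E=F\circ E=\varphi$. Thus the whole argument hinges on correctly identifying the restriction of the canonical state of $\mathcal{A}$ to $\mathcal{B}$ with the canonical state $F$ of $\mathcal{B}$, which is where I would be most careful about the normalization and the trace-vector bookkeeping.
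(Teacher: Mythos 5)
Your overall route is the same as the paper's: use \cite{CKP} to produce a system $(\mathbb{C}\subseteq\mathcal{B},F)$ with $U$-property, concatenate via \Cref{con}, and apply \Cref{trace} to identify $F\circ E$ with the state $\varphi$ of \Cref{the state}. The gap is in your closing step. You reduce everything to the identity $\varphi|_{\mathcal{B}}=F$ and propose to verify it by ``normalization and trace-vector bookkeeping.'' That bookkeeping does not close on its own: the restriction of $\varphi$ (trace vector $\tilde{n}$) to $\mathcal{B}$ is the tracial state on $\mathcal{B}$ with trace vector $A^t\tilde{n}$, whereas $F$ has trace vector $\tilde{m}$, so the equality $\varphi|_{\mathcal{B}}=F$ is precisely the statement that $A^t\tilde{n}$ is proportional to $\tilde{m}$ --- i.e., the spectral condition of \Cref{spectral}. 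For a general inclusion this is simply false, and under the $U$-property hypothesis it is a genuine theorem whose proof needs the dimension-counting argument with the unitary basis. You never invoke it, so the proposal as written is incomplete. (Citing \Cref{spectral} would be a legitimate, non-circular repair, since its proof does not rely on \Cref{Markov trace}.)

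In fact, the identity you need is already a consequence of what you have established, with no bookkeeping at all: since $E(Y)=Y$ for $Y\in\mathcal{B}$, restricting $F\circ E=\varphi$ to $\mathcal{B}$ gives $F(Y)=F(E(Y))=\varphi(Y)$, i.e., $\varphi|_{\mathcal{B}}=F$; then $\varphi(E(X))=F(E(X))=\varphi(X)$ for all $X\in\mathcal{A}$, which finishes the proof by the uniqueness of the $\varphi$-preserving conditional expectation. The paper does the same thing in one stroke without naming $\varphi|_{\mathcal{B}}$: substitute $E(X)$ for $X$ in the identity $E_1E(X)=\varphi(X)I$ and use $E^2=E$ to get $\varphi(E(X))I=E_1E(E(X))=E_1E(X)=\varphi(X)I$. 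Either one-line fix makes your argument correct; what you cannot do is treat $\varphi|_{\mathcal{B}}=F$ as a computation independent of the $U$-property.
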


\begin{proof}
By  \cite{CKP} the subalgebra system $( \mathbb{C}\subseteq
\mathcal{B}, E_1)$ has $U$-property for some conditional expectation
map $E_1$. Then by \Cref{con} $( \mathbb{C}\subseteq \mathcal{A},
E_1E)$ has $U$-property. Thanks to \Cref{trace}, $E_1E$ preserves
the state $\varphi $. This means that
\begin{equation}\label{2.1}
E_1E(X)= \varphi (X)I,~~\forall X\in \mathcal{A}.
\end{equation}
Replacing $X$ by $E(X)$ in this equation, we get
$$E_1E (E(X))= \varphi (E(X))I, ~~\forall X\in \mathcal{A}.$$
Since $E^2=E$, this yields
 \begin{equation}\label{2.2}
 E_1E(X)=
\varphi (E(X))I, ~~\forall X\in \mathcal{A}.
\end{equation}
From  \Cref{2.1} and \Cref{2.2},
$$\varphi (X)= \varphi (E(X)), ~~\forall X\in \mathcal{A}.$$ \end{proof}

In the previous result we can not say that the state $\varphi $ is
the unique tracial state preserved by the conditional expectation.
For instance, when $\mathcal{B}=\mathcal{A}$, the conditional
expectation map is the identity map and it preserves every state.
However, this is not the case in most non-trivial situations.

\begin{cor}
For $n\geq 1$, suppose $\varphi $ is a faithful state on
$M_n(\mathbb{C})$, such that the Hilbert space $M_n(\mathbb{C})$
with inner product
$$\langle X, Y\rangle = \varphi(X^*Y), ~~X,Y\in M_n(\mathbb{C})$$
admits a unitary orthonormal basis. Then $\varphi $ must be the
normlized standard trace on $M_n(\mathbb{C}).$
\end{cor}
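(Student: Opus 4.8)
The plan is to derive this Corollary directly from \Cref{Markov trace} by specializing to the case where $\mathcal{A} = M_n(\mathbb{C})$ and $\mathcal{B} = \mathbb{C}$. First I would observe that the hypothesis — that the Hilbert space $(M_n(\mathbb{C}), \langle\cdot,\cdot\rangle_\varphi)$ with $\langle X,Y\rangle = \varphi(X^*Y)$ admits a unitary orthonormal basis — is precisely the statement that the subalgebra system $(\mathbb{C} \subseteq M_n(\mathbb{C}), E)$ has $U$-property, where $E(X) = \varphi(X)I$ is the conditional expectation onto the scalars determined by $\varphi$. Indeed, for the inclusion $\mathbb{C} \subseteq M_n(\mathbb{C})$ the conditional expectation $E:\mathcal{A}\to\mathbb{C}$ must be of the form $E(X)=\phi(X)I$ for the unique state $\phi$ it preserves, and the orthonormality condition $E(W_j^* W_k) = \delta_{jk}$ reads $\varphi(W_j^* W_k) = \delta_{jk}$, which is exactly orthonormality of the $W_j$ in the Hilbert space inner product. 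So the abstract unitary o.n.b.\ of the theorem and the concrete one demanded by the Corollary are the same object.

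Next I would apply \Cref{Markov trace} with $\mathcal{A} = M_n(\mathbb{C})$ and $\mathcal{B} = \mathbb{C}$. In this single-block case the decomposition \eqref{decompositionA} has $s = 1$ with $n_0 = n$, so the tracial state $\varphi$ of \Cref{the state} becomes
\[
\varphi(X) = \frac{1}{n^2}\, n\,\mbox{trace}(X) = \frac{1}{n}\,\mbox{trace}(X),
\]
which is the normalized standard trace. The theorem tells us that $E$ is the unique conditional expectation preserving this $\varphi$; since here $E$ is forced to be $E(X) = \phi(X)I$ and a conditional expectation preserves the state it is built from, we conclude $\phi = \varphi$ is the normalized trace.

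The only point requiring a little care is the identification in the first paragraph: I must make sure that the hypothesis phrased purely in terms of the Hilbert space inner product genuinely corresponds to a $U$-property of a subalgebra system, i.e.\ that the relevant conditional expectation is exactly the one determined by $\varphi$. For $\mathcal{B} = \mathbb{C}$ this is transparent because any conditional expectation $E:\mathcal{A}\to\mathbb{C}$ is a state times the identity, and faithfulness of $\varphi$ guarantees the inner product is genuine (positive definite). I expect this bookkeeping step to be the main — indeed the only — obstacle, and it is minor; once the translation is in place the Corollary is an immediate specialization of \Cref{Markov trace} to the simplest nontrivial inclusion. Alternatively, one could invoke \Cref{trace} directly in the one-block case and avoid \Cref{Markov trace} altogether, since \Cref{trace} already pins down $\rho = \frac{n_i}{d} I_i$, which for $s=1$ gives $\rho = \frac{1}{n}I$ and hence $\varphi = \frac{1}{n}\mbox{trace}$.
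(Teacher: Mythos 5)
Your proof is correct and takes essentially the same route as the paper, which obtains the corollary as the immediate specialization of \Cref{Markov trace} to $\mathcal{A}=M_n(\mathbb{C})$, $\mathcal{B}=\mathbb{C}$, and $E(X)=\varphi(X)I$. The bookkeeping you spell out (identifying the Hilbert-space unitary o.n.b.\ with the Pimsner--Popa one, and the alternative shortcut via \Cref{trace}) is left implicit in the paper but is exactly the intended reduction.
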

\begin{proof}
This is just the special case of the previous theorem with
$\mathcal{A}=M_n(\mathbb{C}), \mathcal{B}=\mathbb{C}$ and $E$ given
by $E(X)= \varphi(X)I, ~~X\in \mathcal{A}.$

\end{proof}

 Now we show a very important spectral property  for the inclusion matrix of a subalgebra system admitting
 a unitary orthonormal basis.  This has several consequences.

 \begin{thm}\label{spectral}
Let $(\mathcal{B}\subseteq \mathcal{A}, E)$ be a subalgebra system,
with
$$A\tilde{m}=\tilde{n},$$ where $A$ is the inclusion matrix, and
$\tilde{m}, \tilde{n}$ are dimension vectors as in
(\Cref{dimension}). Suppose $(\mathcal{B}\subseteq \mathcal{A}, E)$
admits a unitary orthonormal basis with $d$-unitaries.
 Then
\begin{equation}
 A^t\tilde{n}= d\tilde{m},
\end{equation}
 Consequently,
$$A^tA\tilde{m}=d\tilde{m}; ~~AA^t\tilde{n}=d\tilde{n}.$$
In particular, $\tilde{m}, \tilde{n}$ are Perron-Frobenius
eigenvectors of $A^tA$ and $AA^t$ respectively with eigenvalue $d$.

 \end{thm}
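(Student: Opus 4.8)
The plan is to read off the vector identity $A^t\tilde n=d\tilde m$ from a purely dimension-theoretic analysis of the right $\mathcal B$-module structure that the basis puts on $\mathcal A$, cut down by the central projections of $\mathcal B$. It is worth stressing at the outset that the spectral relation itself needs no trace-preservation hypothesis; it is forced by counting complex dimensions, and the componentwise (as opposed to merely global) conclusion comes from slicing with one central projection at a time.

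First I would establish the internal direct-sum decomposition $\mathcal A=\bigoplus_{j=0}^{d-1}W_j\mathcal B$ of right $\mathcal B$-modules. Spanning is immediate from the basis identity \eqref{right basis}, since $E(W_j^*X)\in\mathcal B$ for every $X\in\mathcal A$. For directness, suppose $\sum_j W_jb_j=0$ with $b_j\in\mathcal B$; applying $E(W_k^*\,\cdot\,)$ and using the bimodule property together with the orthonormality relation \eqref{orthonormality} gives $E(W_k^*\sum_jW_jb_j)=\sum_j E(W_k^*W_j)b_j=b_k$, whence $b_k=0$ for each $k$. Because each $W_j$ is unitary, left multiplication by $W_j$ is injective, so $\dim_{\mathbb C}(W_j\mathcal C)=\dim_{\mathbb C}\mathcal C$ for every subspace $\mathcal C\subseteq\mathcal B$.

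Next, for each $0\le k\le r-1$ let $z_k\in\mathcal B$ be the central projection equal to the identity of the block $M_{m_k}$. Right multiplication by $z_k$ maps each summand $W_j\mathcal B$ into itself, so the decomposition restricts to $\mathcal A z_k=\bigoplus_{j=0}^{d-1}W_j(\mathcal B z_k)$, and since $\mathcal B z_k=M_{m_k}$ this yields $\dim_{\mathbb C}(\mathcal A z_k)=d\,\dim_{\mathbb C}(\mathcal B z_k)=d\,m_k^2$. I would then compute the same dimension on the $\mathcal A$-side. Inside $\mathcal A=\bigoplus_i M_{n_i}$, the $i$-th component of $z_k$ is a projection of rank $a_{ik}m_k$, because the block $M_{m_k}$ enters $M_{n_i}$ with multiplicity $a_{ik}$, each copy contributing a rank-$m_k$ identity. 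Since the right ideal $M_n p$ attached to a rank-$\rho$ projection $p$ has complex dimension $n\rho$, I get $\dim_{\mathbb C}(\mathcal A z_k)=\sum_i n_i\,a_{ik}\,m_k=m_k\,(A^t\tilde n)_k$. Equating the two expressions and cancelling $m_k>0$ gives $(A^t\tilde n)_k=d\,m_k$ for every $k$, i.e.\ $A^t\tilde n=d\tilde m$.

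The stated consequences follow by substitution: $A^tA\tilde m=A^t\tilde n=d\tilde m$ using $A\tilde m=\tilde n$, and $AA^t\tilde n=A(d\tilde m)=d\tilde n$. Finally $A^tA$ and $AA^t$ are symmetric with non-negative entries and possess the strictly positive eigenvectors $\tilde m,\tilde n$, so a standard Perron--Frobenius argument identifies $d$ with their common spectral radius $\|A\|^2$ and $\tilde m,\tilde n$ as the corresponding Perron--Frobenius eigenvectors. The one step demanding genuine care is the local dimension count: everything hinges on computing $\dim_{\mathbb C}(\mathcal A z_k)$ correctly from the inclusion matrix, and in particular on the observation that cutting the direct-sum decomposition by a \emph{central} projection of $\mathcal B$ keeps the sum direct. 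That is precisely what promotes the easy scalar identity $\dim_{\mathbb C}\mathcal A=d\dim_{\mathbb C}\mathcal B$ to the full componentwise relation $A^t\tilde n=d\tilde m$.
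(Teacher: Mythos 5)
Your proof is correct and follows essentially the same route as the paper: both arguments compute $\dim_{\mathbb C}(\mathcal A Q_j)$ (your $\mathcal A z_k$) in two ways, once via the inclusion matrix as $\sum_i n_i a_{ij} m_j$, and once as $d\,m_j^2$ using the unitary basis, with linear independence coming from applying $E(W_a^*\,\cdot\,)$ and the bimodule property. Your packaging of the second count as the direct-sum decomposition $\mathcal A z_k=\bigoplus_j W_j(\mathcal B z_k)$ of right $\mathcal B$-modules is just a reformulation of the paper's explicit basis $\{W_kY_l\}$ for $\mathcal A Q_j$.
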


\begin{proof}
 Let $P_i$ be the projection onto the $i$-th summand in the
decomposition $\mathcal{A}= \oplus _{i=0}^{s-1}M_{n_i},$ and
similarly let $Q_j$ be the projection onto the $j$-th summand in the
decomposition  $\mathcal{B}= \oplus _{j=0}^{r-1} M_{m_j}$ for $0\leq
i\leq (s-1); 0\leq j\leq (r-1).$ Note that $P_i, Q_j$ are minimal
central projections of $\mathcal{A}, \mathcal{B}$, respectively. We
will compute the dimension of the vector space,
$$\mathcal{A}Q_j =\{XQ_j: X\in \mathcal{A}\}$$
for fixed $j$, with $0\leq j\leq (r-1)$ in two different ways.

Recall that  $\mathcal{A}P_i$ gives us the $i$-th component of
$\mathcal{A}$, in which the $j$-th component $M_{m_j}$ of
$\mathcal{B}$ appears $a_{ij}$ times. Therefore $\mathcal{A}P_iQ_j$
consists of rectangular matrices of size $n_i\times a_{ij}m_j$ in
$M_{n_i}$. Hence
$$\mbox{dim}~(\mathcal{A}P_iQ_j)= n_ia_{ij}m_j.$$
As we have a direct sum decomposition,
\begin{equation}\label{nA}
\mbox{dim}~(\mathcal{A}Q_j)= \sum _{i=0}^{s-1}n_ia_{ij}m_j.
\end{equation}
Let $\{W_0, W_1, \ldots , W_{d-1}\}$ be an unitary orthonormal basis
for the subalgebra system $(\mathcal{B}\subseteq  \mathcal{A}, E).$
Let $\{Y_0, Y_1, \ldots , Y_{m_j^2-1}\}$ be a basis for
$\mathcal{B}Q_j.$ We claim that $\{W_kY_l: 0\leq k\leq (d-1); 0\leq
l\leq (m_j^2-1)\}$ is a basis for $\mathcal{A}Q_j$.

If $X\in \mathcal{A}$, then $X= \sum _{k=0}^{d-1}W_kE(W_k^*X)$ So,
\begin{equation}\label{AQ_j} XQ_j= \sum _{k=0}^{d-1}W_kE(W_k^*X)Q_j,
\end{equation}
Note that $E(W_k^*X)$ is in $\mathcal{B}$. This shows that
$\{W_kY_l\}$ spans $\mathcal{A}Q_j.$ Now if $\sum c_{kl}W_kY_l=0$
for some scalars $c_{kl}.$ For any fixed $a$, by orthonormality of
$W_k$'s,
$$0 = E(W_a^*(\sum _{k,l}c_{kl}W_kY_l))= \sum
_{k,l}c_{kl}E(W_a^*W_k)Y_l=\sum _lc_{al}Y_l,
$$ which implies $c_{al}=0$. So $\{W_kY_l\}$ are linearly independent and the dimension of $\mathcal{A}Q_j$ is $d.m_j^2$. Combining with
equation (\ref{nA}), we get $\sum _{i=0}^{s-1}n_ia_{ij}m_j=dm_j^2$
or equivalently $A^t\tilde{n}= d\tilde{m}.$
\end{proof}.
\begin{cor} Suppose a subalgebra system $(\mathcal{B}\subseteq
\mathcal{A}, E)$ with inclusion matrix $A$ has $U$-property. Then
the square of the norm of $A$ is an integer and equals to the number
of elements in the unitary basis.
\end{cor}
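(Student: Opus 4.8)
The plan is to read off the conclusion from the spectral relations already recorded in Theorem~\ref{spectral}. Since the subalgebra system has the $U$-property, it admits a unitary orthonormal basis; let $d$ denote the number of its elements. By Theorem~\ref{spectral} we then have
\begin{equation}
A^tA\tilde{m}=d\tilde{m},
\end{equation}
so that $d$ is an eigenvalue of the symmetric matrix $A^tA$, with the dimension vector $\tilde{m}$ as a corresponding eigenvector. Two features of this situation are crucial: first, the inclusion matrix $A$ has non-negative (in fact non-negative integer) entries, so that $A^tA$ is a non-negative symmetric matrix; second, $\tilde{m}$ is a strictly positive vector, since each multiplicity $m_j$ is the size of a genuine matrix block and hence $m_j\geq 1$.

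The key step is to upgrade "eigenvalue" to "largest eigenvalue". I would invoke the Perron--Frobenius fact that a non-negative matrix possessing a strictly positive eigenvector must have the corresponding eigenvalue equal to its spectral radius: if $M\geq 0$ and $Mv=\lambda v$ for some $v>0$, then $\lambda=\rho(M)$. Applying this with $M=A^tA$, $v=\tilde{m}$, and $\lambda=d$ yields $d=\rho(A^tA)$. This is exactly the content encoded by the phrase \emph{Perron--Frobenius eigenvector} in the statement of Theorem~\ref{spectral}, and it is the main obstacle in the argument, since from $A^tA\tilde{m}=d\tilde{m}$ alone one only knows that $d$ is \emph{some} eigenvalue.

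Finally I would connect the spectral radius to the norm. Because $A^tA$ is symmetric and positive semidefinite, its spectral radius equals its largest eigenvalue and hence its operator norm, giving $\rho(A^tA)=\lVert A^tA\rVert=\lVert A\rVert^2$. Combining the two displayed identities produces $\lVert A\rVert^2=d$. Since $d$ is by construction the number of unitaries in the orthonormal basis, it is a positive integer, and therefore $\lVert A\rVert^2$ is an integer equal to the cardinality of the basis, as claimed. The only substantive input beyond Theorem~\ref{spectral} is the elementary Perron--Frobenius characterization of the spectral radius via a positive eigenvector, together with the identity $\lVert A\rVert^2=\lVert A^tA\rVert$ for the spectral (operator) norm.
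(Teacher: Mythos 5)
Your proposal is correct and takes essentially the same route as the paper: both arguments read off $A^tA\tilde{m}=d\tilde{m}$ from Theorem~\ref{spectral} and then identify $d$ with the spectral radius $\rho(A^tA)=\lVert A\rVert^2$ via the Perron--Frobenius principle for non-negative matrices. Your write-up merely makes explicit the step the paper leaves implicit, namely that strict positivity of the dimension vector $\tilde{m}$ is what upgrades $d$ from \emph{an} eigenvalue to \emph{the} Perron--Frobenius eigenvalue.
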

\begin{proof}
This is clear from the previous theorem as Perron-Frobenius
eigenvalue of a non-negative matrix is its spectral radius and the
spectral radius of $A^tA$ is equal to $\|A\|^2.$

\end{proof}

\begin{cor}
Suppose a finite dimensional inclusion $(\mathcal{B}\subseteq
\mathcal{A}, E)$ has $U$-property. Then $E$ is the unique
conditional expectation preserving the Markov trace.
\end{cor}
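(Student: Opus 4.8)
The plan is to combine the two main results already established in this section, namely \Cref{Markov trace} and \Cref{spectral}, so the corollary should require essentially no new computation. The statement asserts two things packaged together: that $E$ preserves the Markov trace, and that it is the \emph{unique} such conditional expectation. I would treat these as two separate observations and then glue them.

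First I would invoke \Cref{Markov trace}, which tells us that whenever $(\mathcal{B}\subseteq \mathcal{A}, E)$ has $U$-property, $E$ is the unique conditional expectation preserving the tracial state $\varphi$ with trace vector equal to the dimension vector $\tilde{n}$ of $\mathcal{A}$; this $\varphi$ is given explicitly by \Cref{the state}. So the uniqueness part and the identification of the preserved trace as $\varphi$ are already in hand. What remains is purely to certify that this particular $\varphi$ is the \emph{Markov} trace for the inclusion, i.e.\ that its trace vector $\tilde{n}$ satisfies the Perron--Frobenius condition \Cref{Markov}, namely $AA^t\tilde{n} = \|A\|^2 \tilde{n}$.

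This last point is exactly what \Cref{spectral} delivers. Since the system has $U$-property with, say, $d$ unitaries in the basis, that theorem gives $AA^t\tilde{n} = d\,\tilde{n}$, so $\tilde{n}$ is a Perron--Frobenius eigenvector of $AA^t$ with eigenvalue $d$. By the preceding corollary (Perron--Frobenius eigenvalue equals spectral radius equals $\|A\|^2$), we have $d = \|A\|^2$, whence $AA^t\tilde{n} = \|A\|^2\tilde{n}$, which is precisely the Markov condition \Cref{Markov} for the trace vector $\tilde{n}$. Therefore $\varphi$ is a Markov state, and $E$ is the unique conditional expectation preserving it.

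I do not anticipate a genuine obstacle here: the corollary is a bookkeeping assembly of \Cref{Markov trace} (uniqueness and which trace) and \Cref{spectral} together with its corollary (identifying $d=\|A\|^2$ and verifying the eigenvector equation). The only point requiring minor care is making sure the word ``Markov'' is being used consistently with the definition via \Cref{Markov}, so that checking $AA^t\tilde{n}=\|A\|^2\tilde{n}$ really is checking the Markov property, rather than conflating it with the distinct ``Markov trace on the tower'' notion from basic-construction theory. Once that definitional alignment is noted, the proof is a one-line deduction.
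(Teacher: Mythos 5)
Your proposal is correct and follows exactly the route the paper intends: its proof is the one-line remark that the corollary ``follows from the definition of Markov trace and the previous result,'' i.e.\ precisely the assembly of \Cref{Markov trace} (which trace $E$ preserves, and uniqueness) with \Cref{spectral} and the corollary identifying $d=\|A\|^2$, so that $AA^t\tilde{n}=\|A\|^2\tilde{n}$ verifies the Markov condition \Cref{Markov}. Your write-up simply makes explicit the bookkeeping the paper leaves implicit.
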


\begin{proof}  Follows from the definition of Markov trace and the previous result.
\end{proof}

Combining Theorem \ref{Markov trace} and Theorem \ref{spectral}, we
have Theorem A.  We also note the following interesting quadratic
relationship. This shouldn't be very surprising as we have similar
relations for group algebras (See comments in the beginning of
Section 7).

\begin{cor}\label{quadratic}
Let $(\mathcal{B}\subseteq \mathcal{A}, E)$ be a subalgebra system
with inclusion matrix $A$ and dimension vectors $\tilde{n},
\tilde{m}.$ Suppose it has $U$-property with $d$-unitaries in the
basis. Then the vector space dimensions of $\mathcal{A},\mathcal{B}$
are related by,
\begin{equation}
\sum _in_i^2=d \sum _jm_j^2
\end{equation}
\end{cor}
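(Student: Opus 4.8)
The plan is to recognize both sums as squared Euclidean norms of the dimension vectors and then chain the two linear relations furnished by Theorem~\ref{spectral}. First I would note that since $\dim M_{n_i}=n_i^2$, the total vector space dimension of $\mathcal{A}$ is $\sum_i n_i^2 = \tilde{n}^t\tilde{n}$, and similarly $\dim \mathcal{B}=\sum_j m_j^2=\tilde{m}^t\tilde{m}$. Hence the assertion is precisely the scalar identity $\tilde{n}^t\tilde{n}=d\,\tilde{m}^t\tilde{m}$.

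Next I would invoke the two facts established in Theorem~\ref{spectral} under the $U$-property hypothesis: the dimension-counting identity $\tilde{n}=A\tilde{m}$ and the spectral condition $A^t\tilde{n}=d\tilde{m}$. The key manoeuvre is to expand $\tilde{n}^t\tilde{n}$ so that one copy of $\tilde{n}$ is replaced by $A\tilde{m}$ and the transpose $A^t$ is thereby made to act on the other copy: writing $\tilde{n}^t\tilde{n}=(A\tilde{m})^t\tilde{n}=\tilde{m}^t(A^t\tilde{n})$ and then applying $A^t\tilde{n}=d\tilde{m}$ gives $\tilde{m}^t(d\tilde{m})=d\,\tilde{m}^t\tilde{m}$, which is exactly the claimed relation.

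There is essentially no obstacle beyond this bookkeeping: the entire content has already been packaged into Theorem~\ref{spectral}, and all that remains is to pair the dimension vectors correctly so that $A$ and $A^t$ each land on the appropriate factor. I would add a one-line remark that the computation reads $\|\tilde{n}\|^2 = d\,\|\tilde{m}\|^2$, making transparent why the factor $d=\|A\|^2$ mediates between the dimensions of the two algebras. As an equivalent route one could instead start from the self-adjoint relation $A^tA\tilde{m}=d\tilde{m}$ and pair it with $\tilde{m}$, obtaining $\tilde{m}^tA^tA\tilde{m}=(A\tilde{m})^t(A\tilde{m})=\tilde{n}^t\tilde{n}$ on the one hand and $d\,\tilde{m}^t\tilde{m}$ on the other; this avoids even the explicit substitution $\tilde{n}=A\tilde{m}$ in a separate step.
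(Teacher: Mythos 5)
Your proof is correct and follows essentially the same route as the paper: both arguments combine the identities $\tilde{n}=A\tilde{m}$ and $A^t\tilde{n}=d\tilde{m}$ from Theorem~\ref{spectral} by pairing the dimension vectors and moving $A$ across the inner product (indeed, your alternative computation via $\langle \tilde{m}, A^tA\tilde{m}\rangle$ is word-for-word the paper's proof). No gaps; your version even cleans up a small typo in the paper's display, where $d$ appears as $b$.
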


\begin{proof}
With notation as before,
$$\langle \tilde{m}, A^tA\tilde{m}\rangle =\langle \tilde{m},
A^t\tilde{n}\rangle = d\langle \tilde{m}, \tilde{m}\rangle= b\sum
_jm_j^2.$$ Also,
$$\langle \tilde{m}, A^tA\tilde{m}\rangle= \langle A\tilde{m},
A\tilde{m}\rangle= \langle \tilde{n}, \tilde{n}\rangle = \sum
_in_i^2.$$

\end{proof}

\begin{rem}
Following the arguments in the proof of Theorem \ref{spectral},
where we assume only a Pimsner-Popa basis, which may not be
orthogonal, all the steps go through, except verification of the
linear independence. This leads to the inequality:
$$A^t\tilde{n}\leq d\tilde{m}.$$
\end{rem}

\section{The conditional expectation}

In this section we introduce some  notation and describe how the
conditional expectation map looks like for finite dimensional
inclusions. We also write down canonical trace preserving
conditional expectation maps as mixed unitaries.

Here and in subsequent sections we need to deal with roots of unity
of different orders. It is very convenient to have the following
notation. Define $\epsilon :\mathbb{R}\to \mathbb{T}$ by
\begin{equation}\label{exponential function}\epsilon (x) = e^{2\pi ix}, ~~x\in \mathbb{R}.\end{equation}
 We make repeated use of the following elementary observations.

\begin{rem}\label{exponential}  Let $\epsilon $ be the exponential function defined
as above. Then (i) $\epsilon (x+y)=\epsilon(x). \epsilon(y),
~~\forall x,y\in \mathbb{R};$ (ii) $\epsilon (-x)= \overline
{\epsilon (x)}, ~~\forall x\in \mathbb{R};$ (iii) $\epsilon (x)=1$
iff $x\in \mathbb{Z}.$ (iv) For $k\in \mathbb{N}$, $x\in
\mathbb{R},$
$$\sum _{j=0}^{k-1}\epsilon (jx)= \left\{ \begin{array}{cl}
0& ~~\mbox{if}~ kx\in \mathbb{Z}, ~x\notin \mathbb{Z}\\
k& ~~\mbox{if}~x\in \mathbb{Z} \end{array}\right. $$
\end{rem}

Consider two finite dimensional von Neumann algebras: $$\mathcal{A}=
\oplus _{i=0}^{s-1}M_{n_i}, ~~~\mathcal{B}=\oplus
_{j=0}^{r-1}M_{m_j}$$ with $\mathcal{B}\subseteq \mathcal{A}$,
having an inclusion matrix $A=[a_{ij}]_{1\leq i \leq (s-1); 1\leq
j\leq (r-1)}.$ Here it would be assumed that for every $j$, there
exists some $i$, such that $a_{ij}\neq 0$. Otherwise, a component of
$\mathcal{B}$ would be missing in $\mathcal{A}$ and we don't have unital inclusion $\mathcal{B}\subseteq \mathcal{A}.$

 Fix any such faithful tracial state $\varphi $ on
$\mathcal{A}$ as in \ref{tracial state}.  We know that there exists
a unique conditional expectation from $\mathcal{A}$ to $\mathcal{B}$
which preserves $\varphi .$ In this section we wish to compute this
conditional expectation. This will be useful to verify orthogonality
of unitaries of various bases we are going to construct in
subsequent sections. We also show that if $\varphi $ preserves the
standard trace on $\mathcal{A}$ (that is when $p_i$'s are all
equal), the conditional expectation is a mixed unitary channel and
can be written down explicitly in such a form.

The algebra $\mathcal{A}$ acts naturally on a Hilbert space
$\mathcal{H}$  of dimension $n_0\oplus \cdots  \oplus n_{s-1}.$ It
is convenient to choose a basis which encodes the inclusion of
$\mathcal{B}.$ We do this by choosing an orthonormal basis:
$$\{ u_{ijkl}: 0\leq i\leq (s-1); 0\leq j\leq (r-1); 0\leq k\leq
(a_{ij}-1); 0\leq l\leq (m_j-1)\},$$ for $\mathcal{H}$  with
following understanding:

\begin{enumerate}
\item  For fixed $i$, $|u_{ijkl}\rangle \langle u_{ij'k'l'}|$, with
$j,k,l,j',k',l'$ varying,  form the matrix units of the $i$-th
summand $M_{n_i}$ of $\mathcal{A}.$

\item  For fixed $i,j, k$, the matrix units of $k$-th copy
of $M_{m_j}$ of $\mathcal{B}$ in $M_{n_i}$, are given by
$|u_{ijkl}\rangle \langle u_{ijkl'}|$ with $0\leq l,l' \leq
(m_j-1).$

\item  For fixed $j$, the matrix units of $\mathcal{B}$ are given
by
$$\sum _{i=0}^{ (s-1)}\sum _{k=0}^{(a_{ij}-1)}|u_{ijkl}\rangle \langle
u_{ijkl'}|.$$
\end{enumerate}

Note that if $a_{ij}=0$,  $M_{m_j}$ does not appear in $M_{n_i}$. In
such a case, there is no vector of the form $u_{ijkl}$.

Putting together different diagonal blocks created by $\mathcal{B}$
we get a new algebra,
$$\mathcal{C}:= \oplus _{i=0}^{s-1}\oplus _{j=0}^{r-1}\oplus
_{k=0}^{(a_{ij}-1)}M_{m_j},$$ where the matrix units of $M_{m_j}$
for fixed $i,j,k$ are as in (ii).  Clearly this is an intermediate
von Neumann algebra: $\mathcal{B}\subseteq \mathcal{C}\subseteq
\mathcal{A}.$

First we consider a conditional expectation of $\mathcal{A}$ onto
$\mathcal{C}$. We will call it $E_1$. It is easy to write it down.
For fixed $i,j,k,$ let $P_{ijk}$ be the projection on to the
subspace spanned by  $\{ u_{ijkl}:0\leq l\leq (m_j-1)\}.$ For any
$X\in \mathcal{A}$, take
$$E_1(X) = \sum _{i,j,k}P_{ijk}XP_{ijk}.$$
It is simply pinching of $X$, to the diagonal blocks. Clearly $E_1$
is a unital completely positive map. The diagonal entries of
$E_1(X)$ (in the chosen basis) are same as that of $X$. Hence $E_1$
preserves {\em every} tracial state on $\mathcal{A}.$ Furthermore,
$$E_1(|u_{ijkl}\rangle \langle u_{ijkl'}|)= |u_{ijkl}\rangle \langle
u_{ijkl'}|.$$ Hence $E_1(Y)=Y,$ for $Y\in \mathcal{C}.$ This shows
that $E_1$ is a conditional expectation from $\mathcal{A}$ to
$\mathcal{C}.$

Now we wish to compute the conditional expectation from
$\mathcal{C}$ to $\mathcal{B}$, which preserves $\varphi .$ Recall
that $\mathcal{C}$ contains only certain diagonal blocks. The
conditional expectation $E_2$ is got by taking weighted averages of
these blocks and the weights depend upon the state $\varphi $ we
want to preserve.  Indeed take \begin{equation}\label{q}q_{ij}=
\frac{p_i}{\sum _{x=0}^{s-1}p_xa_{xj}}.\end{equation}
 It is well
defined as $p_x$'s are strictly positive and  for every $j$,
$a_{xj}\neq 0$ for some $x$, and so we have  $\sum
_{x=0}^{s-1}p_xa_{xj}\neq 0.$  Note that $\sum
_{i=0}^{s-1}a_{ij}q_{ij}=1.$

Consider $Y= \oplus _{ijk} Y_{ijk}$ in $\mathcal{C}= \oplus
_{ijk}M_{m_j}$. We take, $E_2(Y)= \oplus _{ijk}Z_{ijk}$ where
$$Z_{ijk} := \sum _{v=0}^{s-1}\sum _{w=0}^{a_{vj}-1}q_{vj}Y_{vjw}.$$
Note that $Z_{ijk}$ does not depend upon $i$ or $k$. Hence $\oplus
_{ijk}Z_{ijk}\in \mathcal{B}$ and $E_2$ is a CP map. Also,
\begin{eqnarray*}
\varphi(E_2(Y))&= & \frac{1}{N}
\sum_{ijk}p_i~\mbox{trace}~(Z_{ijk})\\
&=& \frac{1}{N}\sum _{ijk}p_i
(\sum_{v=0}^{s-1}\sum_{w=0}^{a_{vj}-1}q_{vj}~\mbox{trace}(Y_{vjw}))\\
&=& \frac{1}{N}\sum _{ij}
\sum_{v=0}^{s-1}\sum_{w=0}^{a_{vj}-1}p_ia_{ij}q_{vj}~\mbox{trace}(Y_{vjw})\\
&=& \frac{1}{N}\sum _{j}
\sum_{v=0}^{s-1}\sum_{w=0}^{a_{vj}-1}p_v~\mbox{trace}(Y_{vjw})\\
&=& \varphi (Y).
\end{eqnarray*}
If $Y\in \mathcal{B}$, then $Y_{ijk}$ does not depend upon $i,k$ and
so we get
$$ \sum _{v=0}^{s-1}\sum _{w=0}^{a_{vj}-1}q_{vj}Y_{vjw}=
Y_j.\sum _{v=0}^{s-1}a_{vj}q_{vj}=Y_j.$$ Therefore $E_2$ is a UCP
map, fixing elements of $\mathcal{B}.$

Now take $E:=E_2\circ E_1: \mathcal{A}\to \mathcal{B}.$ Being a
composition of conditional expectation maps, it is a conditional
expectation. Since both $E_1, E_2$ preserve $\varphi $, $E$ also
preserves $\varphi .$

\subsection{Mixed unitary channels}

We wish to write down the conditional expectation map $E$ as a mixed
unitary channel, that is, we want to have a family of unitaries
$\{U_k: k\in K\}$ such that the map $E$ is a convex combination of
maps of the form: $X\mapsto U_kXU_k^*.$ Clearly any such map must
preserve the standard trace. So in this subsection we assume that
the state $\varphi $ is given by
$$\varphi (\oplus X_i)= \frac{1}{\sum _xn_x}~\sum
_{i=0}^{s-1}\mbox{trace}(X_i).$$

We consider the lexicographic order on $\{(i,j): 0\leq i\leq (s-1);
0\leq j\leq (r-1)\}$ by setting $(i_1, j_1)<(i_2, j_2)$ if either
$i_1<i_2$ or $i_1=i_2$ and  $j_1<j_2.$  Take $T_j=\sum
_{i=0}^{s-1}a_{ij}$ and $T=\sum T_j$. In other words, $T_j$ is the
number of copies of $M_{m_j}$ of $\mathcal{B}$ in $\mathcal{A}$ and
$T$ is the total number of such blocks. Then the formula for weights
(\ref{q}) simplifies to, $q_{ij}= \frac{1}{\sum
_{x=0}^{s-1}1.a_{xj}}= \frac{1}{T_j}$ for every $i,j.$

 Look at the map $E_1$ defined before. To express it as
a mixed unitary channel, define $K$ on $\mathcal{H}$ by
$$K(u_{ijkl})= \epsilon (\frac{1}{T}[\sum _{(i_1, j_1)<(i,j)}a_{i_1j_1}+k]) u_{ijkl} .$$
It means that $K$ is a diagonal operator, which acts as scalar
$\epsilon (\frac{1}{T}{\sum _{(i_1, j_1)<(i,j)}a_{i_1j_1}+k})$ on
the $(i,j,k)$-th block of $\mathcal{C}.$ Take
$$F_1(X)= \frac{1}{T} \sum _{x=0}^{T-1}K^xX(K^x)^*, X\in
\mathcal{A}.$$ We claim that $F_1$ is same as $E_1.$
 Taking powers of $K$, $$K^x(u_{ijkl})= \epsilon (\frac{x}{T}[ \sum _{(i_1,
 j_1)<(i,j)}a_{i_1j_1}+k]) u_{ijkl}.$$ Hence,
$$ F_1( |u_{ijkl}\rangle \langle u_{i'j'k'l'}| )
 = \sum _{x=0}^{T-1}\epsilon (\frac{x}{T}[\sum _{(i_1,j_1)<
 (i,j)}a_{i_1j_1}+k -\sum _{(i_2, j_2)<(i', j')}a_{i_2j_2}-k'])
 |u_{ijkl}\rangle \langle u_{i'j'k'l'}|.
 $$
From Remark  \ref{exponential}, concerning basic properties of the
exponential function $\epsilon $, the last sum is zero, unless
$$\sum _{(i_1,j_1)<
 (i,j)}a_{i_1j_1}+k =\sum _{(i_2, j_2)<(i', j')}a_{i_2j_2}+k'.$$
This happens iff $(i,j,k)= (i', j',k')$ as $ \sum _{(i_1,j_1)<
 (i,j)}a_{i_1j_1}+k$ on varying $i,j,k$ is just an enumeration of all the integers from
 $0$ to $T-1$. Consequently,
$$F_1(|u_{ijkl}\rangle \langle u_{i'j'k'l'}|)= \left\{
\begin{array}{cl}
0& ~\mbox{if}~ (i,j,k)\neq (i',j',k');\\
|u_{ijkl}\rangle \langle u_{i'j'k'l'}|& ~\mbox{Otherwise}.
\end{array}\right.$$ Comparing with the action of $E_1$, it is clear
that $F_1=E_1$.

Now we look at $E_2$. For fixed $j\in \{0, 1, \ldots , (r-1)\}$,
take $\mathcal{G}_j=\{(i,k): 0\leq i\leq (s-1); 0\leq k\leq
a_{ij}-1\}.$  We order the elements of $\mathcal{G}_j$ also in
lexicographic order. Note that $\mathcal{G}_j$ has $T_j$ many
elements and $T_j\neq 0.$ We may name the elements of
$\mathcal{G}_j$ as $\{g_0, g_1, \ldots , g_{T_j-1}\}$ (The
dependence on $j$ is suppressed here in notation). Let $\sigma
:\mathcal{G}_j\to \mathcal{G}_j$ be the cyclic permutation of
$\mathcal{G}_j$, defined by $\sigma (g_y)= g_{y+1},$ for $0\leq y<
T_j-1$ and $\sigma (g_{T_j-1})=g_0\}.$

For fixed $j_1$, define $L_{j_1}$ by setting
$$L_{j_1}u_{ijkl}= \left\{ \begin{array}{cl}
u_{ijkl}&~\mbox{if}~ j\neq j_1;\\
u_{i'jk'l}&~\mbox{if}~j=j_1; \sigma ((i,k))=(i',
k').\end{array}\right.$$ In other words, $L_{j_1}$ acts as identity
on basis vectors $u_{ijkl}$ with $j\neq j_1$ and if $j=j_1$ it
cyclically permutes $(i,k)$ in $\mathcal{G}_{j_1}.$ We consider the
powers of $L_{j_1}$, $L_{j_1}^x$ for $0\leq x\leq T_{j_1}-1$ and set
$$G_{j_1}(Y)= \frac{1}{T_{j_1}}\sum _{x=0}^{T_{j_1}-1}
L_{j_1}^xY(L_{j_1}^x)^*,~~Y\in \mathcal{C}.$$ Clearly $G_{j_1}$ is a
mixed unitary channel on $\mathcal{C}.$ It averages all the blocks
in $\mathcal{C}$ corresponding to $M_{m_{j_1}}$ of $\mathcal{B}.$ In
particular, $$ G_{j_1}(|u_{ij_1kl}\rangle \langle u_{ij_1kl'}|)=
\frac{1}{T_{j_1}}\sum _{x=0}^{T_{j_1}-1}L_{j_1}^x|u_{ij_1kl}\rangle
\langle
u_{ij_1kl'}|(L_{j_1}^x)^*\\
= \frac{1}{T_{j_1}}\sum _{(i_1,k_1)\in
\mathcal{G}_{j_1}}|u_{i_1j_1k_1l}\rangle \langle u_{i_1j_1k_1l'}|
$$
and if $j\neq j_1$,
$$G_{j_1}(|u_{ijkl}\rangle \langle u_{ijkl'}|) =
\frac{1}{T_{j_1}}\sum _{x=0}^{T_{j_1}-1}L_{j_1}^x|u_{ijkl}\rangle
\langle u_{ijkl'}|(L_{j_1}^x)^* = |u_{ijkl}\rangle \langle
u_{ijkl'}|.
$$
We may carry this averaging process with respect to every $j$. Then
it is clear the map $Y\mapsto G_0\circ G_1\circ \cdots \circ
G_{r-1}(Y), ~~Y\in \mathcal{C}$ is same as the conditional
expectation map $E_2$ defined earlier.

\begin{thm}
Let $(\mathcal{B}\subseteq \mathcal{A}, E)$ be a subalgebra system,
where $E$ is the conditional expectation preserving the standard
trace. Then $E$ is a mixed unitary channel: $E(X)=$
$$ \frac{1}{T_0T_1\cdots T_{s-1}.T}\sum
_{x_0=0}^{T_0-1}\sum _{x_1=0}^{T_1-1}\cdots \sum
_{x_{r-1}=0}^{T_{r-1}-1}\sum _{y=0}^{T-1}L_0^{x_0}L_1^{x_1}\cdots
L_{{r-1}}^{x_{r-1}}K^yX(K^y)^*(L_{{r-1}}^{x_{r-1}})^*\cdots
(L_1^{x_1})^*(L_0^{x_0})^*.$$
\end{thm}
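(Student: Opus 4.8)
The plan is to assemble the theorem directly from the three facts already established in this section: that $E=E_2\circ E_1$, that $E_1=F_1$ with $F_1(X)=\frac{1}{T}\sum_{y=0}^{T-1}K^yX(K^y)^*$, and that $E_2=G_0\circ G_1\circ\cdots\circ G_{r-1}$ with each $G_{j}(Y)=\frac{1}{T_{j}}\sum_{x=0}^{T_{j}-1}L_{j}^xY(L_{j}^x)^*$. Since each $K$ and each $L_j$ is a unitary on $\mathcal{H}$ (the operator $K$ is diagonal with unimodular entries, and each $L_j$ merely permutes the chosen orthonormal basis of $\mathcal{H}$), every summand produced below is a genuine unitary conjugation, so the end result will be a convex combination of such conjugations, i.e.\ a mixed unitary channel. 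Thus the theorem is really a bookkeeping assembly rather than a new computation.

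First I would record that, for the chosen state $\varphi$ (the normalized standard trace), the weights collapse to $q_{ij}=1/T_j$, so that the maps $F_1$ and $G_j$ are exactly the ones displayed in the construction. Since $F_1=E_1$ carries $\mathcal{A}$ into the intermediate algebra $\mathcal{C}$, the composite $G_0\circ\cdots\circ G_{r-1}\circ F_1$ is well defined on all of $\mathcal{A}$ and agrees there with $E=E_2\circ E_1$.

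Next I would expand the composition one factor at a time. Applying $G_{r-1}$ to $F_1(X)$, then $G_{r-2}$, and so on down to $G_0$, each step introduces one more averaged sum: the power $L_{j}^{x_{j}}$ is prepended on the left and its adjoint appended on the right, with the $K^y$ coming from the $F_1$-average sitting innermost. Carrying this out for every $j$ yields
$$E(X)=\frac{1}{T_0T_1\cdots T_{r-1}\cdot T}\sum_{x_0=0}^{T_0-1}\cdots\sum_{x_{r-1}=0}^{T_{r-1}-1}\sum_{y=0}^{T-1}L_0^{x_0}\cdots L_{r-1}^{x_{r-1}}K^y\,X\,(K^y)^*(L_{r-1}^{x_{r-1}})^*\cdots(L_0^{x_0})^*,$$
which is the asserted formula. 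The normalizing constant is precisely the product of the individual normalizations $T_0,\ldots,T_{r-1},T$, and it equals the total number of summands, so the coefficients are nonnegative and sum to one.

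The one point requiring care --- and the only place a careless calculation could go astray --- is the bookkeeping of the order of the conjugating factors: because conjugation by $G_j$ sends $M\mapsto L_j^{x_j}M(L_j^{x_j})^*$, the left factors accumulate in the order $L_0,L_1,\ldots,L_{r-1}$ reading outward while the adjoints accumulate in the reverse order on the right, and the $K^y$ from $F_1$ must stay innermost. I would also remark that, since distinct $L_j$ act on disjoint index sets, they commute, so the particular ordering is immaterial; nonetheless, following the defined composition $G_0\circ\cdots\circ G_{r-1}\circ F_1$ already reproduces the stated expression verbatim. Finally I would observe that each composite $L_0^{x_0}\cdots L_{r-1}^{x_{r-1}}K^y$ is a product of unitaries, hence unitary, which confirms that $E$ is genuinely a mixed unitary channel.
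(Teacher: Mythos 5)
Your proposal is correct and follows essentially the same route as the paper: the paper's own proof simply observes that $E=E_2\circ E_1$ is a composition of the mixed unitary channels $F_1$ and $G_0\circ\cdots\circ G_{r-1}$ constructed earlier in the section, which is exactly your assembly argument (you merely spell out the expansion, the ordering of the conjugating factors, and the normalization in more detail). Your careful bookkeeping also implicitly corrects the small typo in the stated normalization, which should read $T_0T_1\cdots T_{r-1}\cdot T$ rather than $T_0T_1\cdots T_{s-1}\cdot T$.
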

\begin{proof}
As  both $E_1, E_2$ are mixed unitary channels, $E=E_2\circ E_1$ is
also a mixed unitary by composition.

\end{proof}

It is to be noted that in this description of the conditional
expectation map, the unitaries in general may not be elements of
$\mathcal{A}$. They are from the ambient space $M_{n_0+n_1+\cdots
+n_{s-1}}.$

\section{The abelian case}

In this Section we construct a unitary orthonormal basis for a
subalgebra system $(\mathcal{B}\subseteq \mathcal{A}, E)$ where
either $\mathcal{A}$ or $\mathcal{B}$ is abelian. This is a
generalization of the construction in \cite{CKP}, where the special
case of  $\mathcal{B}=\mathbb{C}$ was considered. To begin with note
that if $\mathcal{A}$ is abelian then so is $\mathcal{B}.$ Therefore
it suffices to consider the case where $\mathcal{B}$ is abelian.

 At first we introduce a class of matrices. The unitaries in our constructions of
orthonormal bases are invariably in this class. Recall the
exponential function $\epsilon $ defined in equation
(\ref{exponential function}).  For any natural number $n$, the
$n\times n$, finite Fourier matrix $F_n$ is the unitary matrix given
by
$$F_n= \frac{1}{\sqrt{n}}[\epsilon (\frac{jk}{n})]_{0\leq j,k\leq
(n-1)}.$$ Denote the $n\times n$ diagonal matrix with diagonal
entries $a_0, \ldots , a_{n-1}$ by $D(a_0, \ldots ,a_{n-1}).$
Similarly for $n$-complex numbers $b_0, b_1, \ldots , b_{n-1}$ the
{\em circulant matrix\/}  $C(b_0, \ldots , b_{n-1})$ is defined as:
$$C(b_0, \ldots , b_{n-1})= F_n^*D(b_0, \ldots , b_{n-1})F_n.$$
Note that we are parametrizing circulant matrices by their
eigenvalues and not by their matrix entries. This simplifies our
computations.

\begin{defn} A matrix of the form $D_1CD_2$, where $D_1, D_2$ are
diagonal and $C$ is circulant is called a {\em quasi-circulant\/}
matrix.
\end{defn}

We collect together a few basic facts about quasi-circulant matrices
in the form a proposition for easy reference. It is to be noted that
the notion is basis dependent.

\begin{prop}
Let $a_0, \ldots , a_{n-1}, b_0, \ldots , b_{n-1}, c_0, \ldots ,
c_{n-1}$ be complex numbers.
\begin{enumerate}
\item If $a_j$'s and $b_j$'s are of modulus $1$ then $D(a_0, \ldots , a_{n-1})$ and $C(b_0, \ldots ,
b_{n-1})$ are unitary matrices with eigenvalues $a_0, \ldots ,
a_{n-1}$ and $b_0, \ldots , b_{n-1}$ respectively.

\item For any natural number $r$,
$$ (D(a_0, \ldots , a_{n-1}))^r= D(a_0^r, \ldots ,
a_{n-1}^r);~~~~ (C(b_0, \ldots , b_{n-1}))^r=C(b_0^r, \ldots ,
b_{n-1}^r).$$

\item $C(b_0, \ldots , b_{n-1})=\frac{1}{n}[\sum _{y=0}^{n-1}b_y\epsilon (\frac{y(k-j)}{n}) ]_{0\leq j,k\leq (n-1)}.$
 In particular the diagonal entries of $C(b_0, \ldots , b_{n-1})$ are
all equal and equal to $\frac{1}{n}\sum _yb_y.$

\item $C(b_0, \ldots , b_{n-1})D(a_0, \ldots , a_{n-1})= \frac{1}{n}[\sum _{y=0}^{n-1}b_y\epsilon (\frac{y(k-j)}{n})a_k]_{0\leq j,k\leq (n-1)},$
and its trace is given by $\frac{1}{n} (\sum _yb_y)(\sum _xa_x).$

\item $D(a_0, \ldots , a_{n-1})C(b_0, \ldots , b_{n-1})D(c_0, \ldots
, c_{n-1})=  \frac{1}{n}[a_j\sum _{y=0}^{n-1}b_y\epsilon
(\frac{y(k-j)}{n})c_k]_{0\leq j,k\leq (n-1)}$ and its trace is given
by $\frac{1}{n}(\sum _yb_y).(\sum _xa_xc_x).$

\item The diagonal of $D(a_0, \ldots , a_{n-1})C(b_0, \ldots , b_{n-1})D(c_0, \ldots
, c_{n-1})$ is same as  that of $C(b_0, \ldots , b_{n-1})D(a_0,
\ldots , a_{n-1})D(c_0, \ldots , c_{n-1}).$

\item The circulant matrix $C(1, \frac{1}{n}, \frac{2}{n}, \ldots ,
\frac{n-1}{n})$ is the permutation matrix which permutes the
standard basis $\{e_0, e_1, \ldots , e_{n-1}\}$ cyclically, mapping
$e_j$ to $e_{j+1}$ (with addition modulo $n$) for $0\leq j\leq n-1.$
\end{enumerate}
\end{prop}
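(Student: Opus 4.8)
The plan is to derive all seven assertions from the single defining identity $C(b_0,\ldots,b_{n-1})=F_n^*D(b_0,\ldots,b_{n-1})F_n$ together with the elementary properties of $\epsilon$ collected in Remark~\ref{exponential}. The one fact to establish at the outset is that $F_n$ is unitary: writing $(F_n)_{yk}=\tfrac{1}{\sqrt n}\epsilon(\tfrac{yk}{n})$ and $(F_n^*)_{jy}=\tfrac{1}{\sqrt n}\epsilon(-\tfrac{jy}{n})$, the $(j,k)$ entry of $F_n^*F_n$ equals $\tfrac{1}{n}\sum_{y=0}^{n-1}\epsilon(\tfrac{y(k-j)}{n})$, which by part (iv) of Remark~\ref{exponential} is $\delta_{jk}$ since $0\le j,k\le n-1$. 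This orthonormality of the Fourier columns is the analytic engine behind every item.

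Items (1) and (2) are then immediate. A diagonal matrix with unimodular entries is unitary, so $C=F_n^*D F_n$ is a unitary conjugate of $D$, hence unitary with the same spectrum $\{b_0,\ldots,b_{n-1}\}$; this gives (1), and the eigenvalue statement for $D$ is trivial. For (2) the diagonal case is obvious, while $C^r=(F_n^*DF_n)^r=F_n^*D^rF_n=C(b_0^r,\ldots,b_{n-1}^r)$ because the interior factors $F_nF_n^*$ telescope to the identity. Item (3) is the one genuine computation: expanding $(F_n^*D(b_0,\ldots,b_{n-1})F_n)_{jk}=\sum_y (F_n^*)_{jy}\,b_y\,(F_n)_{yk}$ and collecting exponentials via $\epsilon(-\tfrac{jy}{n})\epsilon(\tfrac{yk}{n})=\epsilon(\tfrac{y(k-j)}{n})$ yields $\tfrac{1}{n}\sum_y b_y\,\epsilon(\tfrac{y(k-j)}{n})$, and setting $j=k$ shows every diagonal entry is the constant $\tfrac{1}{n}\sum_y b_y$.

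With the explicit entry formula of (3) in hand, items (4)--(6) are bookkeeping. Left multiplication by $D(a_0,\ldots,a_{n-1})$ scales the $j$-th row by $a_j$ and right multiplication by $D(c_0,\ldots,c_{n-1})$ scales the $k$-th column by $c_k$, which reads off the stated entries of $CD$ and of $DCD'$ (with $D=D(a_0,\ldots,a_{n-1})$, $D'=D(c_0,\ldots,c_{n-1})$) at once; summing the diagonal entries and using that the diagonal of $C$ is constant equal to $\tfrac{1}{n}\sum_y b_y$ produces the two trace formulas. For (6) I would simply note that the $(j,j)$ entry of any product of $C$ with diagonal matrices sees only the diagonal of $C$: both $DCD'$ and $CDD'$ have $(j,j)$ entry equal to $a_jc_j\cdot\tfrac{1}{n}\sum_y b_y$, so their diagonals coincide even though the matrices differ off the diagonal.

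Item (7) is where a little care is needed, and it is the step I expect to be mildly delicate. Applying the formula of (3) with eigenvalues equal to the $n$-th roots of unity $\epsilon(\tfrac{y}{n})$ (the values the listed arguments $1,\tfrac{1}{n},\ldots,\tfrac{n-1}{n}$ denote once $\epsilon$ is applied), the $(j,k)$ entry becomes $\tfrac{1}{n}\sum_y \epsilon(\tfrac{y(k-j+1)}{n})$, and part (iv) of Remark~\ref{exponential} collapses this to $1$ precisely when $k\equiv j-1 \pmod n$ and to $0$ otherwise. Hence the matrix sends $e_{j-1}$ to $e_j$, i.e.\ it is exactly the cyclic shift $e_m\mapsto e_{m+1}$. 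The only real obstacle throughout is keeping the index conventions straight --- in particular tracking whether $k-j$ or $j-k$ sits in the exponent and matching it to the direction of the shift --- since every analytic input reduces to the single orthogonality relation of Remark~\ref{exponential}.
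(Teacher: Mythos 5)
Your proposal is correct and follows essentially the same route as the paper, which dispatches items (1)--(6) as direct computations from the definition $C=F_n^*DF_n$ and derives (7) from the entry formula in (3) exactly as you do (including the same resolution of the notational point that the listed arguments $1,\tfrac{1}{n},\ldots,\tfrac{n-1}{n}$ become eigenvalues only after applying $\epsilon$). Your write-up simply fills in the Fourier-orthogonality details that the paper leaves implicit.
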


\begin{proof}
The results (1) to (6) follow by direct computations and (7) follows
from (3), as the $jk$'th entry of  $C(1, \frac{1}{n}, \frac{2}{n},
\ldots , \frac{n-1}{n})$ is given by $$\frac{1}{n}\sum _{y=0}^{n-1}
\epsilon(\frac{y}{n})\epsilon(\frac{y(k-j)}{n})=\frac{1}{n} \sum
_{y=0}^{n-1}\epsilon (\frac{y(k+1-j)}{n})=\left\{
\begin{array}{cl} 1& ~\mbox{if}~j=k+1;\\
0&~\mbox{otherwise.}\end{array}\right.$$
\end{proof}

 Now let us take $\mathcal{B}$ as abelian. Then our basic setup
 reduces to
$$\mathcal{A}= \oplus _{i=0}^{s-1}M_{n_i},~~\mathcal{B}=\mathbb{C}^{r},$$
and the dimension vector $\tilde{m}$ has all entries equal to 1.
Consequently the inclusion matrix  $A=[a_{ij}]_{s\times r}$
satisfies $n_i= \sum _{j=0}^{r-1}a_{ij}$ for all $i$ and the
spectral condition implies $\sum _{i=0}^{s-1}n_ia_{ij}=d $ for all
$j$, for some natural number $d.$
 The conditional expectation $E$
should preserve the state $\varphi $ on $\mathcal{A}$, where
$$\varphi (\oplus_i X_i)=\frac{1}{\sum _in_i^2}\sum
_in_i~\mbox{trace}(X_i), ~~\oplus _iX_i \in \mathcal{A}.$$ Further,
$\mathcal{B}=\mathbb{C}^r$ implies that the orthonormal basis
$\{u_{ijkl}\}$ has $l$ identically equal to $0$. Therefore, we may
suppress $l$ from the notation and take the basis as
$$\{u_{ijk}: 0\leq i\leq (s-1); 0\leq j\leq (r-1); 0\leq k\leq
a_{ij}-1\}.$$

Let $P_0, \ldots , P_{s-1}$ be the minimal central projections of
$\mathcal{A}$, that is,  $P_i$ is the projection on to the span of
$\{u_{ijk}: 0\leq j\leq r-1; 0\leq k\leq a_{ij}-1\}.$ The
$(i,j,k)$th -diagonal entry of $X:=\oplus _i X_i$ is given by
$x_{ijk}:=\langle u_{ijk}, Xu_{ijk}\rangle .$ Now the conditional
expectation $E$ is given by
\begin{eqnarray*} E(\oplus _iX_i)&=& \sum _{j=0}^{r-1}\frac{(\sum
_{i=0}^{s-1}\sum
_{k=0}^{a_{ij}-1}n_ix_{ijk})}{\sum _in_ia_{ij}}P_j\\
&=& \frac{1}{d} \sum _{j=0}^{r-1}(\sum _{i=0}^{s-1}\sum_
{k=0}^{a_{ij}-1}n_ix_{ijk})P_j.\end{eqnarray*} In particular,
$E(X)=0$ if and only if for every $j$, \begin{equation}\label{sum}
\sum _{i=0}^{s-1}\sum_ {k=0}^{a_{ij}-1}n_ix_{ijk} =0.
\end{equation}

 Now we define
two unitary matrices  $U= \oplus _{i=0}^{s-1}U_i,~~V=\oplus
_{i=0}^{s-1}V_i, ~~\mbox{in}~\oplus _{i=0}^{s-1}M_{n_i}.$ The
spectrum of these matrices will be various powers of the $d$-th root
of unity. For notational convenience we take  $b:=\frac{1}{d}.$

Let $V_i$ be the circulant matrix $$V_i=C(1, \epsilon({b}),
\epsilon({2}{b}), \ldots , \epsilon({(n_i-1)}{b})).$$ The matrix
$U_i$ is chosen to be a direct sum of diagonal matrices (and hence
diagonal) with respect to the basis chosen above: $$U_i= \oplus
_{j=0}^{r-1} \epsilon ({\sum _{x=0}^{i-1}n_xa_{xj}}{b})D(1,
\epsilon({n_i}{b}), \epsilon({2n_i}{b}), \ldots ,
\epsilon({n_i(a_{ij}-1)}{b}))$$ In other words, the $(i,j,k)$-th
diagonal entry of $U$ is given by $\epsilon( \sum
_{x=0}^{i-1}n_xa_{xj}b+kn_i{b}).$

\begin{thm}\label{abelain unitary}  With notation as above,
$$\{V^tU^t:0\leq t\leq (d-1)\}$$ is a unitary orthonormal basis for
$(\mathbb{C}^r\subseteq \mathcal{A}, E)$.
\end{thm}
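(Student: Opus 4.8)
The plan is to verify the two defining properties of a unitary orthonormal basis for the $d$ matrices $W_t:=V^tU^t$, $0\le t\le d-1$. Unitarity is immediate, since each $V_i$ is a unitary circulant and each $U_i$ a unitary diagonal matrix, so $U,V$ and hence every $W_t=V^tU^t$ is unitary. Because the spectral condition forces $d=\|A\|^2$ (by \Cref{spectral} and its corollary) and we have exactly $d$ candidate unitaries, the remark recalling \cite{Bak} shows that the spanning relation \eqref{right basis} is automatic once the orthonormality relation \eqref{orthonormality}, i.e. $E(W_t^{*}W_{t'})=\delta_{tt'}I$, is established. By the explicit formula for $E$ in the abelian case (displayed just above \eqref{sum}) one has $E(W_t^{*}W_{t'})=\tfrac1d\sum_{j}\Sigma_j P_j$, where
\[
\Sigma_j:=\sum_{i=0}^{s-1}\sum_{k=0}^{a_{ij}-1} n_i\,\langle u_{ijk},\,W_t^{*}W_{t'}u_{ijk}\rangle ,
\]
so the whole statement reduces to proving $\Sigma_j=d\,\delta_{tt'}$ for every $j$.

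The first step is to evaluate the diagonal entry $\langle u_{ijk},W_t^{*}W_{t'}u_{ijk}\rangle=\langle W_tu_{ijk},W_{t'}u_{ijk}\rangle$. Writing $\phi_{ijk}:=\bigl(\sum_{x<i}n_xa_{xj}+kn_i\bigr)b$ for the $U$-phase, the diagonality of $U^t$ gives $W_tu_{ijk}=\epsilon(t\phi_{ijk})\,V_i^tu_{ijk}$, so the two $U$-phases combine into $\epsilon((t'-t)\phi_{ijk})$ and there remains $\langle u_{ijk},(V_i^{t})^{*}V_i^{t'}u_{ijk}\rangle$. Here I would use that powers, adjoints and products of a fixed circulant are again circulant: $(V_i^{t})^{*}V_i^{t'}=C(1,\epsilon(cb),\dots,\epsilon((n_i-1)cb))$ with $c:=t'-t$, and that all diagonal entries of a circulant coincide (equal to $\tfrac1{n_i}\sum_{y=0}^{n_i-1}\epsilon(cyb)=:\gamma_i$). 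Identifying, for fixed $i$, the vectors $u_{ijk}$ with the standard basis of $\mathbb{C}^{n_i}$ in lexicographic $(j,k)$-order, each $u_{ijk}$ reads off a diagonal entry, so the inner product equals $\gamma_i\,\epsilon(c\,\phi_{ijk})$, with the $\gamma_i$ factor independent of $(j,k)$.

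It then remains to sum. For $t=t'$ one has $c=0$, $\gamma_i=1$ and all phases trivial, so $\Sigma_j=\sum_{i,k}n_i=\sum_i n_ia_{ij}=d$ by the spectral condition, giving $E(W_t^{*}W_t)=I$. For $t\ne t'$ set $\zeta:=\epsilon(c/d)$; since $0<|c|<d$ we have $\zeta\ne1$ and $\zeta^{d}=\epsilon(c)=1$. Factoring $\epsilon(c\phi_{ijk})=\zeta^{\sum_{x<i}n_xa_{xj}}\zeta^{kn_i}$ and using $n_i\gamma_i=\sum_{y=0}^{n_i-1}\zeta^{y}$, a short case check (according as $\zeta^{n_i}=1$ or not) shows that the product of this $y$-sum with the $k$-sum $\sum_{k=0}^{a_{ij}-1}\zeta^{kn_i}$ collapses in all cases to $(\zeta^{\,n_ia_{ij}}-1)/(\zeta-1)$. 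Hence, with $N_i:=\sum_{x<i}n_xa_{xj}$,
\[
\Sigma_j=\frac{1}{\zeta-1}\sum_{i=0}^{s-1}\zeta^{N_i}\bigl(\zeta^{\,n_ia_{ij}}-1\bigr)
=\frac{1}{\zeta-1}\sum_{i=0}^{s-1}\bigl(\zeta^{\,N_{i+1}}-\zeta^{N_i}\bigr)
=\frac{\zeta^{\,N_s}-1}{\zeta-1},
\]
which telescopes; since $N_s=\sum_i n_ia_{ij}=d$ by the spectral condition and $\zeta^{d}=1$, this is $0$.

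The computation is otherwise mechanical, and the one genuinely load-bearing observation, which I expect to be the crux, is that the geometric sum coming from the circulant $V_i$ (length $n_i$, ratio $\zeta$) and the geometric sum coming from the diagonal spacing in $U_i$ (length $a_{ij}$, ratio $\zeta^{n_i}$) fit together to give precisely $(\zeta^{\,n_ia_{ij}}-1)/(\zeta-1)=(\zeta^{\,N_{i+1}}-\zeta^{N_i})/(\zeta-1)$, so that the per-block contributions telescope across $i$, while the spectral condition $\sum_i n_ia_{ij}=d$ simultaneously supplies the normalization ($\Sigma_j=d$) in the diagonal case and the cancellation ($\zeta^{d}=1$) in the off-diagonal case. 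The only point requiring care is to treat $\zeta=1$, i.e. $t=t'$, separately, since the telescoping identity is written with $\zeta-1$ in the denominator.
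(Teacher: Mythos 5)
Your proposal is correct and takes essentially the same approach as the paper: both arguments reduce orthonormality to the weighted diagonal sums defining $E$, use the diagonal/circulant structure to identify the diagonal of $W_t^*W_{t'}$ with that of $V^{t'-t}U^{t'-t}$, and then combine the geometric sum of length $n_i$ (from the circulant) with the one of length $a_{ij}$ (from the diagonal phases) into a single sum over $\{0,\dots,d-1\}$ that vanishes by the spectral condition. The only cosmetic differences are that you express the final cancellation as a telescoping geometric series where the paper phrases it as an enumeration of $d$-th roots of unity, and that you invoke the remark citing \cite{Bak} explicitly for the spanning property, which the paper relies on implicitly since its proof also establishes only orthogonality.
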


\begin{proof}
From basic properties of quasi-circulant mentioned above,
$$U^t= \oplus _{i=0}^{s-1}U_i^t, ~~V^t= \oplus _{i=0}^{s-1}V_i^t,$$
where \begin{eqnarray*} V_i^t &=& C(1, \epsilon({t}{b}),
\epsilon({2t}{b}), \ldots
,  \epsilon ({(n_i-1)t}{b}))\\
U_i^t&=& \oplus _{j=0}^{r-1}\epsilon(   {\sum
_{x=0}^{i-1}n_xa_{xj}}{tb} ) D(1, \epsilon({n_i}{tb}),
\epsilon({2n_i}{tb}), \ldots ,
\epsilon({(a_{ij}-1)n_i}{tb})).\end{eqnarray*}

Now $V_i^t$ being a circulant matrix it is possible to compute its
entries. Note that it is an $n_i\times n_i$ matrix and it acts on
the span of $\{ u_{ijk}: 0\leq j\leq (r-1), 0\leq k\leq
(a_{ij}-1)\}.$ For fixed $i$, arranging these vectors using the
lexicographic order of subscripts, the location of $u_{ijk}$ is
$\sum _{v=0}^{j-1}a_{iv}+k$. Hence the entries of $V^t$ are given
by:
$$ (V^t)_{ijk, i'j'k'}= \delta _{i,i'}. \frac{1}{n_i}\sum
_{y=0}^{n_i-1}\epsilon(ytb)\epsilon(\frac{y(\sum
_{v'=0}^{j'-1}a_{iv'}+k'-\sum _{v=0}^{j-1}a_{iv}-k)}{n_i}).$$ Where
as $U^t$ is a diagonal matrix with diagonal entries being,
$$(U^t)_{ijk,ijk}=\epsilon (\sum _{x=0}^{i-1}n_xa_{xj}tb+
kn_itb).$$ Therefore the entries of $W(t):=V^tU^t$ are given by
\begin{equation}\label{The big formula}
W_{ijk, i'j'k'}(t)= \frac{\delta _{ii'}}{n_i}\sum
_{y=0}^{n_i-1}\epsilon (ytb+\frac{y(\sum
_{v'=0}^{j-1}a_{iv'}+k'-\sum _{v=0}^{j-1}a_{iv}-k)}{n_i} +
{kn_itb+\sum _{x=0}^{i-1}n_xa_{xj}}{tb} ).
\end{equation}

In particular the  $(i,j,k)$-th diagonal entry of $W(t)=V^tU^t$ is
given by
\begin{equation}\label{diagonal}W_{ijk}(t):=[\frac{1}{n_i}{ \sum _{y=0}^{n_i-1}\epsilon (ytb)}
] \epsilon({kn_itb+\sum _{x=0}^{i-1}n_xa_{xj}}{tb}).\end{equation}
For fixed $j$,%

\begin{eqnarray*} \sum_{i=0}^{s-1}\sum _{k=0}^{a_{ij}-1}n_iW_{ijk}(t) &=& \sum
_{i=0}^{s-1}\sum _{k=0}^{a_{ij}-1}[\sum _{y=0}^{n_i-1}\epsilon
(ytb)].\epsilon({kn_itb+\sum
_{x=0}^{i-1}n_xa_{xj}}{tb})\\
&=& \sum _{i=0}^{s-1} \epsilon(\sum _{x=0}^{i-1}n_xa_{xj}tb)\left(
\sum _{k=0}^{a_{ij}-1}\epsilon(kn_itb)\left(\sum
_{y=0}^{n_i-1}\epsilon(ytb)\right)\right)\\
&=& \sum _{i=0}^{s-1} \epsilon(\sum _{x=0}^{i-1}n_xa_{xj}tb)\left(
\sum _{z=0}^{n_ia_{ij}-1}\epsilon(ztb)\right)\\
&=& \sum _{v=0}^{d-1}\epsilon({v}{tb})\\
&=& \sum _{v=0}^{d-1}\epsilon(\frac{vt}{d}) ,\end{eqnarray*} which
is zero if $t\neq 0.$

From this computation using equation (\ref{sum}),  we see that
$E(W(t))=0$ for $t\neq 0$. Now for any $v$,
$E((W(v))^*W(t)=E((V^vU^v)^*V^tU^t) = E((U^{-v})^*V^{t-v}U^t).$ As
$U$ is diagonal and $V$ is circulant, the  diagonal of
$U^{-v}V^{t-v}U^t$ is same as that of $V^{t-v}U^{t-v}$. Hence the
conditional expectation $E((U^{-v})^*V^{t-v}U^t)=
E(V^{t-v}U^{t-v})=E(W(t-v))=0$ for $t\neq v$, proving claimed
orthogonality.

\end{proof}

In the previous theorem to prove the orthogonality of the basis
elements it sufficed to know only their diagonal entries. The
theorem includes the construction of \cite{CKP} as a special case.
For reader's convenience we write the formulae explicitly for this
situation. Consider the inclusion $(\mathbb{C}, \mathcal{A}, E)$
where $\mathcal{A}=\oplus _{i=0}^{s-1}M_{n_i}$ and $E(\oplus X_i)=
\sum _{i=0}^{s-1}n_i~\mbox{trace}(X_i)$. The inclusion matrix is
same as the dimension vector $\tilde{n}.$  Now the basis consists of
$\{u_{ik}: 0\leq i\leq (s-1); 0\leq k\leq (n_i-1)\}$ and $d=\sum
_{i=0}^{s-1}n_i^2.$ As before, taking $b=\frac{1}{d}$, the unitary
operators generating the basis are given by
$$U^t= \oplus _{i=0}^{s-1}U_i^t, ~~V^t= \oplus _{i=0}^{s-1}V_i^t,$$
where \begin{eqnarray*} V_i^t &=& C(1, \epsilon({t}{b}),
\epsilon({2t}{b}), \ldots
,  \epsilon ({(n_i-1)t}{b}))\\
U_i^t&=& \epsilon(   {\sum _{x=0}^{i-1}n_x^2}{tb} ) D(1,
\epsilon({n_i}{tb}), \epsilon({2n_i}{tb}), \ldots ,
\epsilon({(n_i-1)n_i}{tb})).\end{eqnarray*} The unitary basis is
given by $W(t)=\oplus _{i=0}^{s-1} W_i(t)=\oplus
_{i=0}^{(s-1)}V_i^tU_i^t, 0\leq t\leq d-1$, where
\begin{equation}\label{The special case}
(W_i(t))_{k,k'}= \frac{1}{n_i}\sum _{y=0}^{n_i-1}\epsilon
(ytb+\frac{y(k'-k)}{n_i} + {kn_itb+\sum _{x=0}^{i-1}n_x^2}{tb} ).
\end{equation}

\section{Generalizing Weyl unitaries}

The subalgebra  system $(\mathbb{C}\subseteq M_n,
\frac{1}{n}\mbox{tr})$  admits a unitary orthonormal basis is
well-known. A standard basis called Weyl unitaries consists of a
family of the form $\{ V^jU^k:0\leq j,k\leq (n-1)\}$ where $V$ is a
cyclic shift and $U$ is a diagonal unitary with roots of unity on
the diagonal.

Generalizing this construction we exhibit a unitary basis where we
replace $\mathbb{C}$ by a general finite dimensional abelian algebra
and $M_n$ is replaced by a direct sum of copies of $M_n$ with
inclusion satisfying the spectral condition.

Consider subalgebra system $(\mathcal{B}, \mathcal{A}, E)$ where
\begin{eqnarray*} \mathcal{B} &=& \mathbb{C}\oplus \cdots \oplus \mathbb{C} ( r
~\mbox{times}) = \mathbb{C}^r,\\
\mathcal{A}&=& M_n\oplus \cdots \oplus M_n (s ~\mbox{times})\\
\end{eqnarray*} with inclusion  matrix
$A=[a_{ij}]_{r\times s}.$ The dimension vector of $\mathcal{B}$ has
all entries equal to $1$ and the dimension vector of $\mathcal{A}$
has all entries equal to $n$. Now the spectral condition implies
that $\sum _{i=0}^{s-1}a_{ij}=q$ for some fixed natural number $q$,
independent of $j$. The number of basis vectors will be $d=nq$. The
quadratic condition (\ref{quadratic}) implies $rq=ns.$ The Markov
trace on $\mathcal{A}$ is given by
$$\varphi(\oplus _{i=0}^{s-1}X_i) =\frac{1}{ns}\sum
_{i=0}^{s-1}~\mbox{trace}(X_i), ~~\oplus _iX_i \in \mathcal{A}.$$ In
our standard set up, as $\mathcal{B}=\mathbb{C}^r,$ the basis for
the Hilbert space reduces to
$$\{ u_{ijk}: 0\leq i\leq s-1; 0\leq j\leq r-1; 0\leq k\leq
a_{ij}-1\}.$$  Denoting the diagonal entries by, $x_{ijk}=\langle
u_{ijk}X_iu_{ijk}\rangle ,$ the conditional expectation is given by
$E(\oplus _iX_i)= \oplus Y_i,$ where $Y_i$ is a diagonal matrix with
$Y_{ijk,ijk}= \sum _{i=0}^{s-1}\sum _{k=0}^{a_{ij}-1}x_{ijk}.$

First we consider two unitary operators. Take,
$$V= \oplus _{i=0}^{s-1}V_i,$$
where $V_i= C(1, \epsilon(\frac{1}{n}), \epsilon(\frac{2}{n}),
\ldots , \epsilon (\frac{n-1}{n})\}.$ Note that $V_i$ cyclically
permutes the basis vectors. Consequently diagonals of powers $V_i^v$
are all equal to zero for $0<v<n.$

Define $U$ by
$$Uu_{ijk}= \epsilon(\frac{1}{q}(\sum
_{x=0}^{i-1}a_{xj}+k))u_{ijk}.$$ So $U$ is a diagonal unitary and
for any $t\in \mathbb{Z},$
$$U^tu_{ijk}= \epsilon(\frac{t}{q}(\sum
_{x=0}^{i-1}a_{xj}+k))u_{ijk}.$$

\begin{thm}\label{First construction}
With notation as above, $\{ V^vU^t: 0\leq v\leq n-1, 0\leq t\leq
q-1\}$ is a unitary orthonormal basis for the subalgebra system
$(\mathbb{C}^r \subseteq \mathcal{A}, E).$
\end{thm}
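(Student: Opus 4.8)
The plan is to verify the orthonormality relation \eqref{orthonormality} directly and then deduce the spanning property by a counting argument. The family $\{V^vU^t:0\le v\le n-1,\ 0\le t\le q-1\}$ consists of exactly $nq=d$ operators, each manifestly unitary as a product of the unitaries $V$ and $U$. Moreover, under the spectral condition assumed in this section ($\sum_i a_{ij}=q$ for every $j$, equivalently $A^t\tilde n=d\tilde m$), the vector $\tilde m=\mathbf 1$ is a Perron--Frobenius eigenvector of $A^tA$ with eigenvalue $d=nq$, so $\|A\|^2=nq=d\in\mathbb{N}$. Hence, by the Remark in Section~2, it suffices to prove that $E\big((V^{v'}U^{t'})^*V^vU^t\big)=\delta_{vv'}\delta_{tt'}\,I$ for all admissible indices.

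First I would reduce the operator to be expectated to a single product. Since $(V^{v'}U^{t'})^*V^vU^t=U^{-t'}V^{v-v'}U^t$, and since for $\mathcal{B}=\mathbb{C}^r$ the conditional expectation $E$ is determined solely by the diagonal entries in the basis $\{u_{ijk}\}$ (indeed $E(X)=0$ iff $\sum_i\sum_k x_{ijk}=0$ for every $j$), only the diagonal of $U^{-t'}V^{v-v'}U^t$ is relevant. Applying property~(6) of the quasi-circulant proposition block by block (with $U^{-t'},U^t$ the diagonal factors and $V^{v-v'}$ the circulant factor) shows that this diagonal coincides with that of $V^{v-v'}U^{t-t'}$. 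Therefore $E\big((V^{v'}U^{t'})^*V^vU^t\big)=E\big(V^{v-v'}U^{t-t'}\big)$, and it remains to analyse the right-hand side.

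Then I would split into three cases. If $v\neq v'$, then $0<|v-v'|<n$, so in each summand $V_i^{v-v'}$ is a nontrivial power of the order-$n$ cyclic shift and hence has zero diagonal; as $U^{t-t'}$ is diagonal, $V^{v-v'}U^{t-t'}$ has zero diagonal and $E$ annihilates it. If $v=v'$ but $t\neq t'$, then $V^{v-v'}=I$ and I must show $E(U^{t-t'})=0$; the $(i,j,k)$-diagonal entry of $U^{t-t'}$ is $\epsilon\big(\tfrac{t-t'}{q}(\sum_{x=0}^{i-1}a_{xj}+k)\big)$, so for each fixed $j$ the relevant block sum is $\sum_{i=0}^{s-1}\sum_{k=0}^{a_{ij}-1}\epsilon\big(\tfrac{t-t'}{q}(\sum_{x<i}a_{xj}+k)\big)$. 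Exactly as in the proof of Theorem~\ref{abelain unitary}, as $(i,k)$ ranges over its domain the integer $\sum_{x<i}a_{xj}+k$ enumerates $0,1,\dots,q-1$, so this collapses to $\sum_{z=0}^{q-1}\epsilon(\tfrac{(t-t')z}{q})=0$ since $0<|t-t'|<q$; hence $E(U^{t-t'})=0$. Finally, if $v=v'$ and $t=t'$ the operator is the identity and $E(I)=I$.

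The computation is short once the machinery of the previous sections is in place; the step I would single out as the crux is the enumeration claim that $\sum_{x<i}a_{xj}+k$ runs over $\{0,\dots,q-1\}$ for each fixed $j$. This is precisely what the spectral (column-sum) condition $\sum_i a_{ij}=q$, independent of $j$, guarantees, and it is also what makes the diagonal reduction collapse the double sum to a clean geometric sum of roots of unity. Everything else is bookkeeping with the quasi-circulant identities and the diagonal description of $E$.
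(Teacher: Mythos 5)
Your proof is correct and follows essentially the same route as the paper's: reduce $E\bigl((V^{v'}U^{t'})^*V^vU^t\bigr)$ to $E(V^{v-v'}U^{t-t'})$ using the quasi-circulant diagonal identity, kill the case $v\neq v'$ by the zero diagonal of nontrivial powers of the cyclic shift, and kill the case $v=v'$, $t\neq t'$ by the enumeration of $\sum_{x<i}a_{xj}+k$ over $\{0,\dots,q-1\}$ guaranteed by the column-sum condition. The only (welcome) difference is that you make explicit the appeal to the Remark in Section~2 (with the verification $\lVert A\rVert^2=nq=d$) to get the spanning property, a step the paper leaves implicit.
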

\begin{proof}
Consider the diagonal entries of $V^vU^t$. Since $U^t$ is diagonal
and $V^v$ has all the diagonal entries equal to $0$ we get
$$\langle u_{ijk}, V^vU^tu_{ijk}\rangle =\left\{ \begin{array}{cl}
0 & ~\mbox{if}~v\neq 0;\\
 \epsilon(\frac{t}{q}(\sum
_{x=0}^{i-1}a_{xj}+k))& ~\mbox{if}~v=0.\end{array}\right.$$ Now, for
fixed $j$, for $t\neq 0$,
$$\sum _{i=0}^{s-1}\sum _{k=0}^{a_{ij}-1}\epsilon(\frac{t}{q}(\sum
_{x=0}^{i-1}a_{xj}+k))=\sum
_{y=0}^{q-1}\epsilon(\frac{t}{q}(y))=0,$$ as $(\sum
_{x=0}^{i-1}a_{xj}+k)$ on varying  $i,k$ is just an enumeration of
all natural numbers from $0$ to $q-1.$ Hence
$$\sum _{i=0}^{s-1}\sum _{k=0}^{a_{ij}-1}\langle u_{ijk},
V^vU^tu_{ijk}\rangle =0.$$ It follows that $E(V^vU^t)=0$ if
$(v,t)\neq (0,0).$

Now for any $v_1, v_2, t_1, t_2$, $(V^{v_1}U^{t_1})^*V^{v_2}U^{t_2}=
U^{-t_1}V^{v_2-v_1}U^{t_2}$. As $U$ is diagonal and $V$ is
circulant, the diagonal of $U^{-t_1}V^{v_2-v_1}U^{t_2}$ is same as
that of  $V^{v_2-v_1}U^{t_2-t_1}$. Therefore if $(v_1, t_1)\neq
(v_2, t_2)$,
$$E(V^{v_1}U^{t_1})^*V^{v_2}U^{t_2}=E(V^{v_2-v_1}U^{t_2-t_1})=0.$$
\end{proof}

We remark that unlike Weyl unitaries  the orthonormal basis
constructed here need not be a  projective unitary representation.

\section{New bases from the old and the full matrix algebra}

  It is well-known that if $\mathcal{A}$ is
the complex group algebra of a finite group $G$ and $\mathcal{B}$ is
the group algebra of a subgroup $H$ of $G$, then with suitable
conditional expectation map $E$, $(\mathcal{B}\subseteq \mathcal{A},
E)$ admits a unitary orthonormal basis (See Example 1.2.3 of
\cite{W}). It is obvious that not all subalgebra systems appear as
subalgebra systems of group algebras.  Characterizing such
inclusions seems to be a difficult problem. The question as to which
multi-matrix algebras are group algebras is itself open and is known
as Brauer's Problem 1 (\cite{Mor}). The crossed product construction
(See Example 2.3 of \cite{CCKL}) is another standard method of
constructing subalgebra systems with unitary orthonormal bases.

 In the last two sections we have explicitly
constructed unitary orthonormal bases for inclusions where the
subalgebra under consideration is abelian. Here we describe a few
methods of constructing unitary orthonormal bases using the known
ones. It is to be noted that whenever we have a unitary basis,
taking transposes, adjoints or complex conjugates of every element
we get another unitary basis  (though a `right' basis may give rise
to a `left' basis).  But this is for the same subalgebra system.
Currently we are interested in simple ways of constructing
orthonormal bases for new inclusions. One such method  is
`concatenation', and this  has already been described in Section 2.
We do not know as to whether the constructions described here
exhaust all the subalgebra systems with $U$-property or not.
However, we are able to show that the answer is in the affirmative
if $\mathcal{A}$ or $\mathcal{B}$ is a full matrix algebra.

\subsection{Tensor products and direct sums} Suppose $(\mathcal{B}_i \subseteq  \mathcal{A}_i, E_i)$ for $i=1,2$
are two finite dimensional inclusions with $U$-property. Suppose
$\{U_1(i) , \ldots , U_{d_i}(i)\}$ is a unitary o.n.b. for
$(\mathcal{B}_i, \mathcal{A}_i, E_i)$ for $i=1,2.$ Take
$\mathcal{B}=\mathcal{B}_1\otimes \mathcal{B}_2$. Consider it as a
subalgebra of $\mathcal{A}=\mathcal{A}_1\otimes \mathcal{A}_2$ in
the natural way with the inclusion matrix being the tensor product
of inclusion matrices. Then it is seen easily that $E=E_1\otimes
E_2$ is a conditional expectation map from $\mathcal{A}$ to
$\mathcal{B}.$ For this inclusion, $\mathcal{B}_1\otimes
\mathcal{B}_2\subseteq \mathcal{A}_1\otimes \mathcal{A}_2,
E_1\otimes E_2)$, we can observe that $\{ U_j(1)\otimes U_k(2):
0\leq j\leq d_1; 0\leq k\leq d_2\}$ is a unitary orthonormal basis.
Under the same set up, if $d_1=d_2$, the subalgebra system
$(\mathcal{B}_1\oplus \mathcal{B}_2\subseteq \mathcal{A}_1\oplus
\mathcal{A}_2, E_1\oplus E_2)$ has $U$-property. In fact, $\{
U_j(1)\oplus U_j(2): 0\leq j \leq d_1\}$ is a unitary orthonormal
basis.

\subsection{Basic Construction}
Let $(\mathcal{B}\subseteq \mathcal{A}, E)$ be an inclusion of
finite dimensional algebras.  Then by the famous Jones basic
construction we have an inclusion $(\mathcal{A}\subseteq
\mathcal{A}_1, E_1)$ where the inclusion matrix is the transpose of
the original inclusion matrix and $E_1$ is the dual conditional expectation of $E$ (see \cite{W}). Now it is known that if
$(\mathcal{B}\subseteq \mathcal{A}, E)$ has U-property, so does the
$(\mathcal{A}\subseteq \mathcal{A}_1, E_1)$ . Indeed if $\{U_0,
\ldots , U_{d-1}\}$ is a unitary orthonormal basis for
$(\mathcal{B}\subseteq \mathcal{A}, E)$,  and $J$ is the Jones
projection in $\mathcal{A}_1$, then $\{ W_j:0\leq j\leq (d-1)\}$
where
$$W_j= \sum _{k=0}^{d-1}\epsilon(\frac{jk}{d})U_kJU_k^*$$
is a unitary orthonormal basis for $(\mathcal{A}\subseteq
\mathcal{A}_1, E_1)$, as shown in \cite{S}.

We may employ any of these methods repeatedly and get various
subalgebra systems admitting unitary basis. We also make use of the
trivial fact that for any algebra $\mathcal{A}$, the inclusion
system $(\mathcal{A}\subseteq \mathcal{A}, E)$, where $E$ is the
identity map, admits a unitary basis, namely $\{I\}.$ This allows us
to get the following results.

\begin{thm} Let $(\mathcal{B}\subseteq \mathcal{A}, E)$ be a subalgebra system with $\mathcal{B}=M_{m}$, and  $E$ preserving the Markov trace. Then
$(\mathcal{B}\subseteq \mathcal{A}, E)$ has $U$-property.
\end{thm}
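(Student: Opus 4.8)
\emph{Proof proposal.} The plan is to exhibit the entire unitary orthonormal basis \emph{inside the relative commutant} $\mathcal{C}:=\mathcal{B}'\cap\mathcal{A}$, thereby reducing the problem to the already-settled inclusion of $\mathbb{C}$ in a multi-matrix algebra. First I would normalize the inclusion: since $\mathcal{B}=M_m$ is simple, the inclusion matrix is a single column $A=(a_0,\dots,a_{s-1})^t$ with $n_i=a_im$, and up to conjugation by a unitary of $\mathcal{A}$ (which preserves the $U$-property and carries bases to bases) we may assume
\[
\mathcal{A}=\bigoplus_{i=0}^{s-1}\bigl(M_{a_i}\otimes M_m\bigr),\qquad \mathcal{B}=\Bigl\{\textstyle\bigoplus_i (I_{a_i}\otimes x):x\in M_m\Bigr\}.
\]
Then the relative commutant is the multi-matrix algebra $\mathcal{C}=\bigoplus_i (M_{a_i}\otimes I_m)\cong\bigoplus_i M_{a_i}$, with dimension vector $\tilde a=(a_0,\dots,a_{s-1})^t$.

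The key step is to understand $E$ restricted to $\mathcal{C}$. Using the bimodule property, for $X\in\mathcal{C}$ and any $Y\in\mathcal{B}$ one has $YE(X)=E(YX)=E(XY)=E(X)Y$, so $E(X)\in Z(\mathcal{B})=\mathbb{C}I_m$; write $E(X)=\lambda(X)I_m$. Because $E$ preserves the Markov state $\varphi$ (the tracial state with trace vector $\tilde n$, as forced by Theorem A) and $\varphi(I_m)=1$, trace preservation gives $\lambda(X)=\varphi(E(X))=\varphi(X)$. A short computation of $\varphi$ on $\mathcal{C}$ then shows that $\varphi|_{\mathcal{C}}$ is exactly the tracial state on $\bigoplus_i M_{a_i}$ whose trace vector equals the dimension vector $\tilde a$; hence $E|_{\mathcal{C}}(X)=\varphi_{\mathcal{C}}(X)I_m$, where $\varphi_{\mathcal{C}}$ is precisely the state appearing in the inclusion $\mathbb{C}\subseteq\mathcal{C}$ of \cite{CKP}.

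With this in hand I would invoke the known case: by \cite{CKP} (equivalently Theorem \ref{abelain unitary} with $r=1$) the system $(\mathbb{C}\subseteq\mathcal{C},E_{\mathcal{C}})$ admits a unitary orthonormal basis $\{C_\alpha:0\le\alpha\le d-1\}$ with $d=\sum_i a_i^2=\dim\mathcal{C}$, characterized by $\varphi_{\mathcal{C}}(C_\alpha^*C_\beta)=\delta_{\alpha\beta}$. Since the inclusion $\mathcal{C}\subseteq\mathcal{A}$ is unital, each $C_\alpha$ is a genuine unitary of $\mathcal{A}$, and by the preceding paragraph $E(C_\alpha^*C_\beta)=\varphi_{\mathcal{C}}(C_\alpha^*C_\beta)I_m=\delta_{\alpha\beta}1_{\mathcal{B}}$. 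Thus $\{C_\alpha\}$ satisfies the orthonormality condition \eqref{orthonormality} for $(\mathcal{B}\subseteq\mathcal{A},E)$.

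Finally, to promote orthonormality to a basis I would appeal to the remark following \cite{Bak}: here $d=\|A\|^2=\sum_i a_i^2$ is an integer and we have produced exactly $d$ orthonormal unitaries, so \eqref{right basis} is automatic and $\{C_\alpha\}$ is a unitary orthonormal basis. The conceptual heart of the argument, and the step I expect to require the most care, is the identification in the second paragraph that the \emph{whole} basis can be housed in $\mathcal{B}'\cap\mathcal{A}$ and that $E$ collapses there to the Crann--Kribs--Pereira scalar expectation with exactly the right weights; once this is established the rest is essentially a matter of bookkeeping and counting dimensions.
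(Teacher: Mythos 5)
Your proposal is correct, and at bottom it performs the same reduction as the paper: the paper's proof writes the inclusion as the tensor product of $(M_m\subseteq M_m,\mathrm{id})$ with $(\mathbb{C}\subseteq\bigoplus_i M_{a_i},E_2)$, where $E_2$ is the Crann--Kribs--Pereira state, and tensors the trivial basis $\{I\}$ with the CKP basis; since the complementary tensor factor $\bigl(\bigoplus_i M_{a_i}\bigr)\otimes I_m$ in that decomposition is exactly your relative commutant $\mathcal{B}'\cap\mathcal{A}$, the two constructions even produce the same unitaries. Where you genuinely diverge is in the supporting machinery. The paper asserts the isomorphism of subalgebra systems (in particular that $E$ becomes $\mathrm{id}\otimes E_2$) and then obtains the expansion property (\ref{right basis}) for free from the tensor-product construction of Section 7.1. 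You instead prove the identification of $E$ on the commutant honestly --- the bimodule/center argument giving $E(X)\in\mathbb{C}I_m$, then trace preservation pinning down the scalar as $\varphi(X)$ with precisely the CKP weights $\tilde a$ --- and, having established only orthonormality (\ref{orthonormality}), you finish with the counting remark (\cite{Bak}, Theorem 2.2), using $d=\|A\|^2=\sum_i a_i^2$. The trade-off: your route fills in the step the paper glosses over (and it is exactly where the hypothesis that $E$ preserves the Markov trace enters; note your parenthetical ``as forced by Theorem A'' is slightly backwards, since here trace preservation is the hypothesis rather than a consequence), but it leans on the nontrivial counting result where the paper needs only the elementary tensor-product lemma, which yields (\ref{right basis}) directly. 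Both arguments are complete.
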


\begin{proof}
As usual take $\mathcal{A}= \oplus _{i=0}^{s-1}M_{n_i}.$ Now
$\mathcal{B}\subseteq \mathcal{A}$ (unitally) means that each $n_i$
is a multiple of $m$, say $n_i=m\times k_i$ for some natural number
$k_i$ and the inclusion matrix is
$$\left[\begin{array}{c}
k_0\\ k_1\\ \vdots \\ k _{s-1}\end{array}\right].$$ Then the
inclusion is isomorphic to the tensor product of $(M_{m}\subseteq
M_{m}, E_1)$ and $(\mathbb{C} \subseteq \oplus _{i=0}^{s-1}M_{k_i},
E_2)$, where $E_1$ is the identity map and $E_2$ is the state:
$$E_2(\oplus _{i=0}^{s-1}Y_i)= \frac{1}{\sum _ik_i^2}\sum
_i~\mbox{trace} (Y_i), ~~ \oplus _i Y_i\in \oplus _i M_{k_i}.$$
Since both of these admit unitary basis (the first one trivially,
and the second one because of Section 4), $(\mathcal{B}\subseteq
\mathcal{A}, E)$ also admits a unitary basis.

\end{proof}

\begin{thm}
Let $(\mathcal{B}\subseteq \mathcal{A}, E)$ be a subalgebra system
with $\mathcal{A}=M_{n}$ and $E$ preserving the unique trace on
$M_{n}.$ Then $(\mathcal{B}\subseteq \mathcal{A}, E)$ has
$U$-property.
\end{thm}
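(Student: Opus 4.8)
The plan is to assemble the required basis from the devices of Section~7 — concatenation, tensor products, and the Jones basic construction — together with the abelian case (Theorem~B) and the simple‑subalgebra case just established (the theorem with $\mathcal{B}=M_m$). Write $\mathcal{B}=\bigoplus_{j=0}^{r-1}M_{m_j}$, let $a_{0j}$ be the multiplicity of $M_{m_j}$ inside $M_n$, and set $d=\lVert A\rVert^2$. By \Cref{spectral} the $U$-property forces the spectral condition $n\,a_{0j}=d\,m_j$, so I take this as part of the hypothesis (without it the statement fails); it says $a_{0j}=\lambda m_j$ with $\lambda=d/n$ independent of $j$. The whole difficulty is that, unlike the $\mathcal{B}=M_m$ case, neither $\mathcal{B}$ nor the multiplicity pattern need split as a tensor product, and the basic construction transports the $U$-property only upward, from $(\mathcal{B}\subseteq\mathcal{A})$ to $(\mathcal{A}\subseteq\mathcal{A}_1)$; one cannot simply invert it. A more roundabout route is therefore needed.

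First I would reduce to the case $\lambda\in\mathbb{Z}$. Writing $\lambda=p/q$ in lowest terms, integrality of $a_{0j}=\lambda m_j$ forces $q\mid m_j$ for every $j$, hence $\mathcal{B}=M_q\otimes\mathcal{B}'$ with $\mathcal{B}'=\bigoplus_j M_{m_j/q}$, and likewise $M_n=M_q\otimes M_{n/q}$ (one checks $q\mid n$). The inclusion then splits as $(M_q\subseteq M_q)\otimes(\mathcal{B}'\subseteq M_{n/q})$, the trace‑preserving expectation factoring as $\mathrm{id}_{M_q}\otimes E'$; since $(M_q\subseteq M_q)$ carries the trivial basis $\{I\}$, the tensor‑product device reduces the problem to $(\mathcal{B}'\subseteq M_{n/q})$, whose scaling parameter is $a_{0j}/(m_j/q)=\lambda q=p\in\mathbb{Z}$. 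Thus I may assume from the outset that $\lambda$ is an integer.

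With $\lambda\in\mathbb{Z}$, the key step is to produce the $\lambda=1$ model $\bigl(\mathcal{B}\subseteq M_{N_0}\bigr)$, where $N_0=\sum_j m_j^2$ and $M_{m_j}$ enters with multiplicity $m_j$. I claim this is exactly the Jones basic construction of the purely abelian inclusion $(\mathbb{C}\subseteq\mathcal{B})$ given by the scalar embeddings, whose inclusion matrix is $(m_j)_j$: transposing gives the multiplicities $m_j$ and the single simple summand $M_{N_0}$. Because $\mathbb{C}$ is abelian and the spectral condition reduces to the one scalar identity $\sum_j m_j^2=N_0$, Theorem~B furnishes a unitary orthonormal basis for $(\mathbb{C}\subseteq\mathcal{B})$, and the basic‑construction device of Section~7 transports it to one for $(\mathcal{B}\subseteq M_{N_0})$. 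Since $n=\lambda N_0$, we have $M_n=M_{N_0}\otimes M_\lambda$, so $(M_{N_0}\subseteq M_n)$ has multiplicity $\lambda$ and a unitary orthonormal basis by the preceding (simple‑subalgebra) theorem. Concatenating these two bases via \Cref{con} yields a unitary orthonormal basis for $(\mathcal{B}\subseteq M_n)$; the inclusion matrices compose correctly, as $(m_j)_j$ followed by $(\lambda)$ gives $(\lambda m_j)_j=(a_{0j})_j$, and since both stages preserve the relevant traces the composite expectation is the trace‑preserving one, matching the given $E$ by uniqueness.

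The main obstacle is precisely the construction of this $\lambda=1$ block. The direct tensor factorization that drives the $\mathcal{B}=M_m$ case is genuinely unavailable here, and the basic construction refuses to run backwards, so the essential insight is to realize the awkward inclusion $(\mathcal{B}\subseteq M_{\sum_j m_j^2})$ as the basic construction of an abelian inclusion, where Theorem~B applies. Once that building block is in hand, the remaining work — the $M_q$‑reduction to integer $\lambda$, the full‑matrix amplification by $M_\lambda$, and the multiplicity and trace bookkeeping needed to invoke \Cref{con} — is routine.
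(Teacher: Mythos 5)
Your proposal is correct and follows essentially the same route as the paper's proof: both write $d/n$ in lowest terms to peel off a trivial $(M_q\subseteq M_q)$ tensor factor, both hinge on realizing the multiplicity-$m_j$ block $\bigl(\mathcal{B}\subseteq M_{\sum_j m_j^2}\bigr)$ as the Jones basic construction of $(\mathbb{C}\subseteq\mathcal{B})$ so that the CKP/abelian result plus the basic-construction device applies, and both absorb the remaining integer factor $\lambda$ through $(\mathbb{C}\subseteq M_\lambda)$ — the paper merely packages these pieces as a three-fold tensor product where you use a tensor reduction followed by concatenation. Your explicit inclusion of the spectral condition as a hypothesis is also the right call, since the paper's proof invokes it without stating it in the theorem.
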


\begin{proof}
Suppose $\mathcal{B}=\oplus _{j=0}^{r-1}M_{m_j}.$ Now the inclusion
matrix is a row vector, say $A=[a_0, a_1, \ldots, a_{r-1}]$
satisfying $n=\sum _j a_jm_j.$ Now the spectral condition implies
that there exists $d\in \mathbb{N}$ such that
$$m_j.d=na_j, ~\forall j.$$
So $a_j=\frac{d}{n}.m_j, ~~\forall j.$ Let $k,l$ be relatively prime
natural numbers such that $\frac{l}{k}=\frac{d}{n}.$ As $a_j$ is an
integer, $k$ divides $m_j$. Take $\tilde{m}_j = \frac{m_j}{k}.$ Then
$$n= \sum _ja_jm_j=\sum _j\frac{d}{n}m_j^2=kl\sum _j\tilde{m}_j^2$$

Consider three inclusions $\mathcal{B}_0=M_k$, $\mathcal{A}_0=M_k$,
$\mathcal{B}_1=\mathbb{C}, \mathcal{A}_1= M_l$ and $
 \mathcal{B}_2=\oplus _j
M_{\tilde{m}_j}$ and $ \mathcal{A}_2=M_{\sum _j\tilde{m}_j^2}.$ We
observe that $(\mathcal{B}_1, \mathcal{A}_1, E_1)$ where $E_1$ is
the normalized trace has U-property. Also $\mathbb{C}\subseteq
\mathcal{B}_2$ (with appropriate conditional expectation) has
$U$-property,  by  \cite{CKP}. On employing Jones basic construction
to this inclusion, $\mathcal{B}_2\subseteq \mathcal{A}_2$ has
$U$-property. Taking the tensor product of three inclusions we see
that $\mathcal{B}\subseteq \mathcal{A}$ with its conditional
expectation preserving the Markov state has $U$-property.
\end{proof}

\section{Applications}
We provide two applications of the theory of unitary orthonormal
bases we developed in the previous sections. One is in subfactor
theory and the other one is in Connes-St{\o}rmer relative entropy.
\subsection{Depth 2 subfactor and unitary
Pimsner-Popa basis} In this subsection we focus on a subfactor $N\subset M$ of type $II_1 $ factors with $[M:N]<\infty$. There exists a unique trace preserving conditional expectation $E$ from $M$ onto $N$. Motivated by the finite dimensional case, we call the triple $(N\subset M,E)$ a \textit{subfactor system}. Popa asked whether there exists a unitary orthonormal basis for an irreducible subfactor $N\subset M$ with $[M:N]$ being an integer. See \cite{pop2} for various motivations of this open problem. The following modification of the above problem also seems to be very interesting (see \cite{BG,BG2}).
\begin{qn}
Does there exist a unitary orthonormal basis for an integer index extremal subfactor?
\end{qn}
Recently, in \cite{BG} it has been proved that any finite index
regular subfactor with either simple or abelian relative commutant
must have unitary orthonormal basis. Also, in \cite{BG} it was
conjectured that any  regular subfactor will have unitary
orthonormal basis. Indeed, in \cite{CKP} this conjecture has been
verified by proving that any regular subfactor with finite Jones
index has unitary orthonormal basis. As pointed out in \cite{BG} and
\cite{CKP}, as a cute application of the existence of the unitary
orthonormal basis one can conclude that any finite index regular
subfactor has depth at most $2$. The existence of unitary
orthonormal basis for a general integer index and depth 2 subfactor
seems to be unknown. In \cite{BG2} we managed to prove that any
finite index and depth $2$ subfactor $N\subset M$ with
$N^{\prime}\cap M$ simple must have unitary orthonormal basis. In
this paper, we have a complementary result as in \Cref{revisit}.
Before stating the result, let us recall few important definitions. \smallskip

Consider an inclusion of finite dimensional $C^*$-algebras
$\mathcal{N\subset M}$ with the Bratelli diagram connected and a
conditional expectation $E:\mathcal{M}\rightarrow \mathcal{N}$. It
follows that the dual inclusion ${\mathcal{M}}^{\prime}\subset
{\mathcal{N}}^{\prime}$ also has connected Bratelli diagram and
there exists a dual conditional expectation $E^{\prime}:
{\mathcal{N}}^{\prime}\rightarrow {\mathcal{M}}^{\prime}.$ Then, by
Theorem 2.6 in \cite{L}, there exists a tracial state $\omega_s$ on
${\mathcal N}^{\prime}\cap \mathcal{M}$ which is the pointwise
strong limit of the words $\{EE^{\prime} EE^{\prime}\cdots ,
E^{\prime} EE^{\prime} E\cdots \}$.
\begin{defn}\cite{L}
(\textit{Minimal conditional expectation}) Suppose we have an
inclusion of finite dimensional von Neumann algebras
$\mathcal{N\subset M}$ with the Bratelli diagram connected. A
conditional expectation $E:\mathcal{M}\rightarrow \mathcal{N}$ is
called \textbf{minimal} if the norm of the Watatani index $\lVert
\text{ind}_w E\rVert $ is minimal among all other conditional
expectations from $\mathcal{M}$ onto $\mathcal{N}$. This minimal
value is called the {minimal index} of $\mathcal{N\subset M}.$
 \end{defn}
 \begin{defn} \cite{L}
 (\textit{Superextremal inclusion})
Consider an inclusion of finite dimensional $C^*$-algebras
$\mathcal{N\subset M}$ with the connected Bratelli diagram. We
denote the Markov trace by $\tau$. The pair $\mathcal{N\subset M}$
is said to be a \textbf{superextremal inclusion} if the
$\tau$-preserving conditional expectation is the minimal conditional
expectation and $\omega_s= \tau |_{{\mathcal{N}}^{\prime}\cap
\mathcal{M}}.$
\end{defn}
\begin{defn} (\textit{Finite depth subfactor})
 Consider a finite index inclusion $N\subset M$ of $II_1$-factors and
 suppose $N\subset M\subset M_1\subset \cdots \subset M_k\subset
 \cdots $ is the corresponding tower of Jones' basic construction. Then, the
 inclusion $N\subset M$ is said to have \textit{finite depth} if there
 exists a $k$ such that $N^{\prime}\cap M_{k-2}\subset N^{\prime}\cap
 M_{k-1}\subset N^{\prime}\cap M_{k}$ is an instance of basic
 construction.  The least such $k$ is defined as the \textit{depth} of
 the inclusion.
 \end{defn}
The study of depth 2 subfactor has attracted a good deal of attention. A  finite depth subfactor is an intermediate subfactor of a depth 2 subfactor.
It is also a well-known fact that any depth 2 subfactor can be characterized by an action of an weak Hopf algebra.
\begin{thm}\label{revisit}
Consider a subfactor $N\subset M$ with finite Jones index and
suppose $E$ denotes the unique $tr$-preserving conditional
expectation. If $N\subset M$ is a depth 2 subfactor with
$N^{\prime}\cap M$ abelian and $N^{\prime}\cap M\subset
N^{\prime}\cap M_1$ is superextremal then the subfactor system
$(N\subset M, E)$  admits a unitary orthonormal basis.
 \end{thm}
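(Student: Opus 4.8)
The plan is to reduce the subfactor statement to a finite-dimensional subalgebra system of relative commutants and then apply Theorem B. Put $\mathcal{B} := N'\cap M$ and $\mathcal{A} := N'\cap M_1$, let $E_M : M_1 \to M$ be the unique $tr$-preserving conditional expectation, and set $\mathcal{E} := E_M|_{\mathcal{A}}$. A one-line use of the bimodule property shows that $E_M(\mathcal{A}) \subseteq \mathcal{B}$: for $x \in N'\cap M_1$ and $n\in N$ one has $n\,E_M(x) = E_M(nx) = E_M(xn) = E_M(x)\,n$, so $E_M(x)\in N'\cap M$. Hence $(\mathcal{B}\subseteq\mathcal{A},\mathcal{E})$ is a bona fide finite-dimensional subalgebra system with $\mathcal{B}$ abelian. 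The whole proof hinges on manufacturing a unitary orthonormal basis $\{W_j\}\subseteq \mathcal{A} = N'\cap M_1$ for this system and then promoting it to the subfactor.

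The role of superextremality is to supply the spectral condition of Theorem A for $(\mathcal{B}\subseteq\mathcal{A},\mathcal{E})$. Superextremality asserts, by \cite{L}, that the $\tau$-preserving expectation is the minimal one and that $\omega_s = \tau|_{N'\cap M}$. Reading these two facts through \Cref{dimension}, the restricted trace $\tau|_{\mathcal{A}}$ is forced to be the Markov trace whose trace vector is the dimension vector $\tilde{n}$ of $\mathcal{A}$, and the minimal-index/Perron--Frobenius identities translate into $A^t\tilde{n} = d\,\tilde{m}$ with $d = [M:N] = \|A\|^2$, where $A$ is the inclusion matrix of $\mathcal{B}\subseteq\mathcal{A}$. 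Thus both the spectral condition and the trace condition of Theorem A hold, $\mathcal{E}$ being the unique Markov-trace-preserving expectation. Since $\mathcal{B}$ is abelian, Theorem B now delivers a unitary orthonormal basis $\{W_j : 0\le j\le d-1\}$, and these $W_j$ may be taken inside $\mathcal{A}=N'\cap M_1$.

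It remains to transfer $\{W_j\}$ to the subfactor. Depth $2$ is exactly what is needed: it guarantees that $N'\cap M_1$ contains a Pimsner--Popa basis of $M_1$ over $M$, so the expansion $x = \sum_j W_j\,E_M(W_j^*x)$ is valid for every $x\in M_1$, not merely for $x\in N'\cap M_1$. Since $W_j,W_k\in N'\cap M_1$ we have $E_M(W_j^*W_k) = \mathcal{E}(W_j^*W_k) = \delta_{jk}$, so $\{W_j\}$ is a unitary orthonormal basis of $(M\subseteq M_1, E_M)$. Finally, one passes from this basic construction back to $(N\subseteq M, E)$ using the depth $2$ symmetry of the Jones tower, exactly as in the simple-relative-commutant argument of \cite{BG2}.

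I expect two steps to be the main obstacles. The first is the translation of superextremality into the combinatorial spectral condition $A^t\tilde{n}=d\,\tilde{m}$: one must match the operator-algebraic data (minimal index, the strong-limit trace $\omega_s$) against the Perron--Frobenius data of the inclusion matrix, and it is here that extremality alone would not suffice. The second is the passage between the subfactor system $(N\subseteq M, E)$ and its finite-dimensional shadow $(\mathcal{B}\subseteq\mathcal{A},\mathcal{E})$ --- specifically verifying that the basis chosen in $N'\cap M_1$ really spans $M_1$ over $M$ (which forces the use of depth $2$) and then descending through the basic construction. Once both bridges are in place, the abelian case is purely a matter of invoking Theorem B.
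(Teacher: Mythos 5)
Your proposal is correct and follows essentially the same route as the paper's proof: pass to the relative commutant inclusion $N'\cap M\subset N'\cap M_1$, use superextremality (via Proposition 4.4 of \cite{L}) to obtain the spectral condition, invoke the abelian-case construction (Theorem \ref{abelain unitary}) to get the unitary orthonormal basis, promote it to $(M\subset M_1,E_M)$ through the depth-2 non-degenerate commuting square, and finally descend to $(N\subset M,E)$ via the downward basic construction. The only cosmetic difference is that the paper phrases the promotion step as ``any Pimsner--Popa basis for $E_{N'\cap M}$ is a Pimsner--Popa basis for $E_M$'' rather than your ``$N'\cap M_1$ contains a basis,'' but the underlying fact used is identical.
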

 \begin{proof}
We denote the Markov trace for $M\subset M_1$ by $tr_1$ which is the
extension of the trace on $M$. We also denote by $E_{N^{\prime}\cap
M}$ the $tr_1|_{N^{\prime}\cap M_1}$-preserving conditional
expectation from $N^{\prime}\cap M_1$ onto $N^{\prime}\cap M$.
 As $N\subset M$ is of depth 2, we have the following non-degenerate commuting square with respect to $tr_1$ \[
\begin{array}{ccc}
M & \subset & M_1\\
\cup & & \cup\\
N'\cap M & \subset & N'\cap M_1\end{array}.
\] In other words, $E_M E_{N^{\prime}\cap M}= E_{N^{\prime}\cap M} E_M $
and any Pimsner-Popa basis for $E_{N^{\prime}\cap M}$ is also a Pimsner-Popa basis for $E_M$.
Since, $N^{\prime}\cap M\subset N^{\prime}\cap M_1$ is superextremal, by Proposition 4.4 in \cite{L}, we see that $N^{\prime}\cap M\subset N^{\prime}\cap M_1$ satisfies the spectral condition as in \Cref{spectral}. By \Cref{abelain unitary} we conclude that we have a unitary orthonormal basis for $(N^{\prime}\cap M\subset N^{\prime}\cap M_1, E_{N^{\prime}\cap M})$ and so, for $(M\subset M_1,E_M)$ too. The proof is complete once we recall that $N\subset M$ is of depth 2 iff $N_{-1}\subset N$ is also of depth 2, where $N_{-1}\subset N$ is a model of the downward basic construction of $N\subset M$. \end{proof}

\begin{rem}
From \Cref{revisit} we conclude if a depth 2 subfactor $N\subset M$ has  the property that  $N^{\prime}\cap M$ is abelian and $N^{\prime}\cap M\subset
N^{\prime}\cap M_1$ is superextremal, then  $[M:N]$ must be an integer. At present, we don't know whether the superextremality condition in \Cref{revisit} will be automatically satisfied for a depth 2 subfactor or not.
 \end{rem}
\smallskip

\subsection{Quantum relative Entropy and unitary orthonormal basis} Generalizing the classical notion of the conditional entropy from ergodic theory,
Connes and St{\o}rmer in \cite{CS} defined a relative entropy
$H(\mathcal{P|Q})$ between a pair of finite dimensional von
Neumann-subalgebras $\mathcal{P}$ and $\mathcal{Q}$ of a finite von
Neumann algebra $\mathcal{M}$ with a fixed faithful normal trace.
Using the relative entropy as the main technical tool they succeeded
to prove a non-commutative version of the Kolmogorov-Sinai type
theorem. Subsequently, Pimsner and Popa (in \cite{PP})  observed
that the definition of the Connes-St{\o}rmer relative entropy does
not depend on $\mathcal{P,Q}$ being finite dimensional and we may
consider the relative entropy $H(\mathcal{P|Q})$ for arbitrary von
Neumann subalgebras $\mathcal{P,Q \subset M}.$ The most striking
result they discovered is the close relationship between the Jones
index of a type $II_1$- subfactor $N\subset M$ and the relative
entropy $H(M|N)$. We refer the reader to \cite{NS} for a
comprehensive study of  relative entropy.

The relative entropy for an inclusion of finite dimensional von
Neumann algebras  happens  to be a fundamental tool in subfactor
theory (see \cite{Po3,Po4}). For an inclusion of multi-matrix
algebras $(\mathcal{B}\subseteq \mathcal{A}, E)$,  with $E$
preserving a faithful trace, Pimsner and Popa in \cite{PP} have
provided very useful formulae for the Probabilistic index
$\lambda(\mathcal{B\subset A},E)$ and the relative entropy
$H(\mathcal{A|B})$. Using those formulas, in \cite{PP2}, Pimsner and
Popa investigated the relationship between the relative entropy $H$
and the norm of the inclusion matrix $A$. One can observe from
\cite{PP2} that the equality of $H(\mathcal{A} |\mathcal{B})$ and
$\ln {\lVert A \rVert}^2$ is a delicate matter. In this paper,
we revisit this problem.
  \begin{thm}\label{entro}
Consider an inclusion of finite dimensional von Neumann algebras
$\mathcal{B\subset A}$ with an inclusion matrix $A$ and suppose
$\mathcal{A}_1$ is the basic construction corresponding to the
Markov trace $\tau$. If the unique $\tau$-preserving conditional
expectation has a unitary orthonormal basis, then
$$H(\mathcal{A}_1|\mathcal{A})=\ln {\lVert A \rVert}^2.$$
\end{thm}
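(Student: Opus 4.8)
The plan is to establish $H(\mathcal{A}_1|\mathcal{A})=\ln\lVert A\rVert^2$ by sandwiching it between the two matching bounds $H(\mathcal{A}_1|\mathcal{A})\le \ln\lVert A\rVert^2$ and $H(\mathcal{A}_1|\mathcal{A})\ge \ln\lVert A\rVert^2$. Write $d=\lVert A\rVert^2$. Since $(\mathcal{B}\subseteq\mathcal{A},E)$ has $U$-property, \Cref{spectral} and its corollaries tell me that $d$ is a positive integer equal to the number of unitaries in the basis, that $E$ preserves the Markov trace $\tau$, and that the basic construction $\mathcal{A}\subseteq\mathcal{A}_1$ has inclusion matrix $A^{t}$ with $\lVert A^{t}\rVert^{2}=d$. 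I will denote the Markov extension of $\tau$ to $\mathcal{A}_1$ again by $\tau$ and the dual conditional expectation $\mathcal{A}_1\to\mathcal{A}$ by $E_1$.

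For the upper bound I would appeal to the general Pimsner--Popa inequality (\cite{PP,PP2}), which bounds the Connes--St{\o}rmer relative entropy of a trace-preserving conditional expectation by the logarithm of its index. Because $\tau$ is the Markov trace, $E_1$ has scalar index equal to $d=\lVert A\rVert^{2}$, so $H(\mathcal{A}_1|\mathcal{A})\le \ln d$. This is the routine half; the content of the theorem is the reverse inequality, which \cite{PP2} shows to be subtle for general traces.

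For the lower bound I would exploit the unitary basis through the variational description of relative entropy,
\[
H(\mathcal{A}_1|\mathcal{A})=\sup\Big\{\sum_{k}\big[\tau(\eta(E_1(x_k)))-\tau(\eta(x_k))\big]:\ x_k\ge 0,\ \sum_k x_k=1\Big\},
\]
where $\eta(t)=-t\ln t$ and the $x_k$ run over finite partitions of unity in $\mathcal{A}_1$. Let $\{w_0,\dots,w_{d-1}\}$ be the unitary orthonormal basis of $(\mathcal{B}\subseteq\mathcal{A},E)$ given by hypothesis, and let $e\in\mathcal{A}_1$ be the Jones projection for $\mathcal{B}\subseteq\mathcal{A}$. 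Since the $w_j$ form a Pimsner--Popa basis, the reconstruction identity gives $\sum_{j=0}^{d-1}w_j e w_j^{*}=1$, and each $x_j:=w_j e w_j^{*}$ is a projection (as $w_j$ is unitary and $e$ is a projection), so $\{x_j\}$ is an admissible family. Using the $\mathcal{A}$-bimodule property of $E_1$ together with the Markov identity $E_1(e)=\frac{1}{d}\,1$, I obtain $E_1(x_j)=w_j E_1(e)w_j^{*}=\frac{1}{d}\,1$ for every $j$. Hence $\tau(\eta(x_j))=0$ because $x_j$ is a projection, while $\tau(\eta(E_1(x_j)))=\eta(\frac{1}{d})=\frac{\ln d}{d}$, and summing the $d$ terms yields exactly $d\cdot\frac{\ln d}{d}=\ln d$. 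Therefore $H(\mathcal{A}_1|\mathcal{A})\ge\ln d=\ln\lVert A\rVert^{2}$, which combined with the upper bound finishes the proof.

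The hard part will be the two structural inputs rather than the final arithmetic. First, the identity $E_1(e)=\frac{1}{d}\,1$ with $d=\lVert A\rVert^{2}$ is exactly where the Markov normalization and the spectral condition of \Cref{spectral} are indispensable: for a non-Markov trace $E_1(e)$ need not be scalar and the index need not equal $\lVert A\rVert^{2}$, so the clean cancellation above would fail. Second, I must make sure the general Pimsner--Popa upper bound is applied in the correct multi-matrix (non-factor) form, so that it reads $\ln\lVert A\rVert^{2}$ and not the logarithm of a non-scalar Watatani index; again this is guaranteed precisely by the Markov property secured through the $U$-property.
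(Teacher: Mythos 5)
Your proof is correct, and its crucial half --- the lower bound --- takes a genuinely different route from the paper's. The upper bound is the same in substance: the paper obtains $\lambda(\mathcal{A}_1,\mathcal{A})=\lVert A\rVert^{-2}$ from Proposition 2.6.3 of \cite{W} and then $H(\mathcal{A}_1|\mathcal{A})\le -\ln\lambda(\mathcal{A}_1,\mathcal{A})$ from Proposition 3.5 of \cite{PP}; these two citations are precisely what resolves the worry you raise about applying the inequality in the correct non-factor form. For the lower bound, however, the paper never touches the variational formula directly: it invokes the Pimsner--Popa estimate (\Cref{pimsnerpopa}) with $q=e_1$, and all the work goes into verifying its hypothesis that $E^{\mathcal{A}_1}_{\mathcal{A}'\cap\mathcal{A}_1}(e_1)$ is a scalar. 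That verification runs through the commutant expectation formula (\Cref{condi}) of Bisch \cite{B} and Jones--Penneys \cite{JP}, the observation that the dual expectation $E_1$ inherits a unitary orthonormal basis and hence the spectral condition of \Cref{spectral}, and Giorgetti--Longo's superextremality criterion (Proposition 4.4 of \cite{L}), which yields the coincidence of expectations (\Cref{imp}). You bypass all of that machinery: because your basis consists of unitaries, the elements $x_j=w_jew_j^*$ are honest projections summing to $1$ (the same reconstruction identity $\sum_j w_jew_j^*=1$ from \cite{Bak} that the paper also uses), so they form an admissible partition in the Connes--St{\o}rmer supremum, and the Markov identity $E_1(e)=\lVert A\rVert^{-2}1$ finishes the computation in a few lines, with $\eta$ vanishing on projections doing the rest. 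Both arguments ultimately rest on the same two ingredients --- the reconstruction identity and the Markov normalization --- but your route is shorter and self-contained, requiring neither superextremality nor commutant formulas; the paper's route, at the cost of heavier citations, establishes intermediate facts of independent interest (superextremality of $\mathcal{A}\subset\mathcal{A}_1$ and scalarity of $E^{\mathcal{A}_1}_{\mathcal{A}'\cap\mathcal{A}_1}(e_1)$) that tie this theorem to the framework used elsewhere in Section 8, notably in \Cref{revisit}.
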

In order to prove the above theorem we first recall a result by Pimsner and Popa.
\begin{prop}\cite{PP}  \label{pimsnerpopa} Let $\mathcal{N\subset M}$ be arbitrary finite von Neumann algebras with a trace $tr$ and $q\in \mathcal{M}$ a projection such that $E^{\mathcal{M}}_{\mathcal{N}^{\prime}\cap \mathcal{M}}(q)=cf $ for some scalar $c$ and some projection $f\in N^{\prime}\cap M$. Then,
$$H(\mathcal{M|N}) \geq c^{-1} tr\big( \eta E_{\mathcal{N}}(q)\big)$$
\end{prop}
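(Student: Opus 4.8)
The plan is to use the variational (supremum) definition of the Connes--St{\o}rmer relative entropy and to produce a single explicit partition of unity whose value already reaches the desired bound. With $\eta(t)=-t\log t$ and $E_{\mathcal{N}}$ the $tr$-preserving conditional expectation onto $\mathcal{N}$, the relevant formula is
\[
H(\mathcal{M}|\mathcal{N})=\sup\Big\{\sum_{i}\big[\,tr(\eta(E_{\mathcal{N}}(x_i)))-tr(\eta(x_i))\,\big]\Big\},
\]
the supremum running over finite families $x_i\ge 0$ in $\mathcal{M}$ with $\sum_i x_i=1$. By operator concavity of $\eta$ together with $tr\circ E_{\mathcal{N}}=tr$, each summand is nonnegative, so it suffices to exhibit \emph{one} partition whose sum is at least $c^{-1}tr(\eta E_{\mathcal{N}}(q))$.

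First I would record two elementary consequences of the hypothesis $E_{\mathcal{N}'\cap\mathcal{M}}(q)=cf$. Applying $tr$ gives $c=tr(q)/tr(f)$. Moreover $q\le f$: indeed $tr\big((1-f)q(1-f)\big)=tr\big(q(1-f)\big)=tr\big(E_{\mathcal{N}'\cap\mathcal{M}}(q)(1-f)\big)=c\,tr(f(1-f))=0$, and faithfulness of $tr$ forces $(1-f)q(1-f)=0$, i.e. $q=qf\le f$.

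The engine of the argument is that conjugation by a unitary $u\in\mathcal{N}$ preserves the summand: $E_{\mathcal{N}}(uqu^*)=uE_{\mathcal{N}}(q)u^*$, whence $tr(\eta(E_{\mathcal{N}}(uqu^*)))=tr(\eta(E_{\mathcal{N}}(q)))$, while $uqu^*\le ufu^*=f$ since $f\in\mathcal{N}'$. The hypothesis says exactly that averaging $uqu^*$ over the unitary group of $\mathcal{N}$ returns $E_{\mathcal{N}'\cap\mathcal{M}}(q)=cf=(tr(q)/tr(f))f$; this is the quantitative statement that $q$ fills the fraction $c$ of $f$ evenly, and it is what lets $f$ be tiled by roughly $1/c$ mutually orthogonal $\mathcal{N}$-conjugates of $q$. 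Concretely I would choose unitaries $u_1=1,u_2,\dots,u_n\in\mathcal{N}$ with $q_i:=u_iqu_i^*$ mutually orthogonal and $\sum_{i=1}^n q_i\le f$ maximal in trace (the orthogonality condition being $q\,u_i^*u_j\,q=0$ for $i\ne j$). Each $q_i$ is a projection, so $\eta(q_i)=0$ and its contribution is exactly $tr(\eta(E_{\mathcal{N}}(q)))$. Taking the partition $\{q_1,\dots,q_n,r\}$ with $r=1-\sum_i q_i\ge 1-f\ge 0$ a projection, the full copies already contribute $n\,tr(\eta E_{\mathcal{N}}(q))$ and the remaining nonnegative term only increases the sum.

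The main obstacle is that $1/c=tr(f)/tr(q)$ need not be an integer (and $q\notin\mathcal{N}'\cap\mathcal{M}$ in general, so the copies must be produced by $\mathcal{N}$-conjugation, not by partial isometries in the relative commutant). Writing $n=\lfloor 1/c\rfloor$ and $\lambda=1/c-n\in[0,1)$, I must extract the missing $\lambda\,tr(\eta E_{\mathcal{N}}(q))$ from a leftover piece of $f$ of trace $\lambda\,tr(q)$. Here I would use comparison theory in the finite von Neumann algebra $\mathcal{M}$ to realize that leftover from a subprojection $q'\le q$ with $E_{\mathcal{N}}(q')=\lambda E_{\mathcal{N}}(q)$, together with the scalar inequality $\eta(\lambda t)=\lambda\eta(t)+\lambda t\log(1/\lambda)\ge\lambda\eta(t)$, which gives $tr(\eta(\lambda E_{\mathcal{N}}(q)))\ge\lambda\,tr(\eta E_{\mathcal{N}}(q))$. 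Assembling the $n$ full conjugates with this fractional piece produces the stated bound; alternatively, since $H$ is a supremum, one may bypass the exact fractional construction by a limiting argument producing partitions whose values converge to $c^{-1}tr(\eta E_{\mathcal{N}}(q))$. I expect the genuine difficulty to lie precisely here --- guaranteeing the orthogonal family of $\mathcal{N}$-conjugates and controlling the fractional remainder --- which is exactly where the hypothesis on $E_{\mathcal{N}'\cap\mathcal{M}}(q)$ and the finiteness of the trace enter in full.
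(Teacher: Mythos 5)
The paper offers no proof of \Cref{pimsnerpopa} --- it is quoted from Pimsner--Popa \cite{PP} --- so your attempt must be measured against their original argument. Your framework is sound: the variational formula for $H(\mathcal{M}|\mathcal{N})$, the nonnegativity of each summand via operator concavity of $\eta$ and $tr\circ E_{\mathcal{N}}=tr$, and the preliminary observations $c=tr(q)/tr(f)$ and $q\le f$ are all correct. The genuine gap is the central existence claim: from $E_{\mathcal{N}'\cap\mathcal{M}}(q)=cf$ you cannot produce unitaries $u_1,\dots,u_n\in\mathcal{N}$, $n=\lfloor 1/c\rfloor$, with the conjugates $u_iqu_i^*$ mutually orthogonal and dominated by $f$. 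The hypothesis only places $cf$ in the $\lVert\cdot\rVert_2$-closed convex hull of the orbit $\{uqu^*:u\in\mathcal{U}(\mathcal{N})\}$; exact orthogonal tilings by $\mathcal{N}$-conjugates are a special structural feature (group--subgroup or crossed-product inclusions) and do not follow from averaging. Concretely, for an irreducible subfactor $N\subset M$ the hypothesis holds automatically for \emph{every} projection $q$, with $c=tr(q)$ and $f=1$; if $tr(q)=1/2$ your scheme demands $u\in\mathcal{U}(N)$ with $uqu^*=1-q$, which forces the spectral distribution of $E_N(q)$ to be symmetric about $1/2$ --- a condition that fails in general --- so a trace-maximal orthogonal family may consist of $q$ alone, far short of tiling $f$. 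The fractional step is likewise unsupported: comparison theory yields subprojections $q'\le q$ of prescribed \emph{trace}, not with the prescribed expectation $E_{\mathcal{N}}(q')=\lambda E_{\mathcal{N}}(q)$ (already impossible when $q$ is minimal, e.g.\ rank one in a matrix algebra, where the only subprojections are $0$ and $q$); and since $c$ can be irrational in the irreducible-subfactor example, this step cannot be dismissed as marginal. The closing appeal to ``a limiting argument producing partitions whose values converge'' is precisely the statement to be proved, not a proof.

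What makes the Pimsner--Popa argument work is that neither orthogonality nor projection-valued partition elements are needed: for $x=t\,uqu^*$ with a scalar $t>0$ and $u\in\mathcal{U}(\mathcal{N})$ one has $\eta(x)=\eta(t)\,uqu^*$ because $q$ is a projection, while $E_{\mathcal{N}}(x)=t\,uE_{\mathcal{N}}(q)u^*$ and $\eta(tE_{\mathcal{N}}(q))=\eta(t)E_{\mathcal{N}}(q)+t\,\eta(E_{\mathcal{N}}(q))$, so that $tr(\eta E_{\mathcal{N}}(x))-tr(\eta(x))=t\,tr(\eta E_{\mathcal{N}}(q))$ \emph{exactly}. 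Hence any family $x_k=t_ku_kqu_k^*$ with $\sum_k x_k\le 1$ contributes $(\sum_k t_k)\,tr(\eta E_{\mathcal{N}}(q))$, and the remainder $x_0=1-\sum_k x_k$ contributes nonnegatively. One then approximates $cf$ in $\lVert\cdot\rVert_2$ by convex combinations $\sum_k\lambda_ku_kqu_k^*$ (Dixmier-type averaging over $\mathcal{U}(\mathcal{N})$, which is exactly what the hypothesis delivers), rescales the weights so that $\sum_k t_k\to c^{-1}$ while keeping the remainder positive up to a $\lVert\cdot\rVert_2$-small correction, and passes to the limit. The scalar weights $t_k$ absorb both the non-integrality of $1/c$ and the absence of orthogonality --- precisely the two points at which your construction breaks down.
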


\medskip

\textit{Proof of  \Cref{entro}} :  Suppose $tr_{\mathcal{A}}$ is the Markov trace for the  inclusion $\mathcal{B\subset A}$. It  is well-known that the minimal conditional expectation $E_0: \mathcal{A\rightarrow B}$ is precisely the unique $tr_{\mathcal{A}}$-preserving conditional expectation. Fix a unitary Pimsner-Popa basis $\{u_i:i\in I\}$ for $(\mathcal{B,A},E_0)$. The trace $tr_{\mathcal{A}}$ extends to a trace on $\mathcal{A}_1$, denoted by $tr_{\mathcal{A}_1}$, which is also the Markov trace for the inclusion $\mathcal{A}\subset \mathcal{A}_1$. Define a trace $tr_{{\mathcal{B}}^{\prime}}$ on the von Neumann algebra ${\mathcal{B}}^{\prime}$ by  $tr_{{\mathcal{B}}^{\prime}}(x^{\prime})=tr_{\mathcal{A}_1}(x_1),$ where $x^{\prime}=J_{\mathcal{A}}x_1J_{\mathcal{A}}$ for some $x_1\in \mathcal{A}_1$. It is easy to check that $tr_{{\mathcal{B}}^{\prime}}$ is a trace and moreover, it is the Markov trace for the inclusion ${\mathcal{A}}^{\prime}\subset {\mathcal{B}}^{\prime}$.  Following \cite{B}[Proposition 2.7] (see also \cite{JP}[Proposition 2.24]),  the unique $tr_{{\mathcal{B}}^{\prime}}$-preserving conditional expectation $E^{{\mathcal{B}}^{\prime}}_{{\mathcal{A}}^{\prime}}$ is given by
\begin{equation}\label{condi}
E^{{\mathcal{B}}^{\prime}}_{{\mathcal{A}}^{\prime}}(x^{\prime})={\lVert A\rVert}^{-2} \sum_i u_i x^{\prime}u^*_i.
\end{equation}
Now if $E_0$ has unitary orthonormal basis we note that the dual conditional expectation $E_1:{\mathcal{A}}_1\rightarrow \mathcal{A}$ will also have a unitary orthonormal basis and therefore, it satisfies the spectral condition as in \Cref{spectral} and  hence, it is superextremal (see \cite{L}[Proposition 4.4]) and therefore, $tr_{{\mathcal{A}}_1}|_{{\mathcal{A}}^{\prime}\cap A_1}= tr_{{\mathcal{A}}^{\prime}}|_{{\mathcal{A}}^{\prime}\cap {\mathcal{A}}_1}= tr_{{\mathcal{B}}^{\prime}}|_{A^{\prime}\cap A_1}.$ It follows that if $e_1$ deontes the Jones projection corresponding to $E_0$ then we must have:
\begin{equation}\label{imp}
E^{{\mathcal{A}}_1}_{{\mathcal{A}}^{\prime}\cap {\mathcal{A}}_1}(e_1)= E^{{\mathcal{B}}^{\prime}\cap {\mathcal{A}}_1}_{{\mathcal{A}}^{\prime}\cap {\mathcal{A}}_1}(e_1).
\end{equation} Since, $\{u_i\}$ is a basis we have $\sum_iu_ie_1u^*_i=1$ (see \cite{Bak}, for instance) and therefore, by \Cref{condi} we get $E^{{\mathcal{B}}^{\prime}\cap {\mathcal{A}}_1}_{{\mathcal{A}}^{\prime}\cap {\mathcal{A}}_1}(e_1)= {\lVert A \rVert}^{-2} $ and by \Cref{imp}, $E^{{\mathcal{A}}_1}_{{\mathcal{A}}^{\prime}\cap {\mathcal{A}}_1}(e_1)= {\lVert A \rVert}^{-2}.$
Using \Cref{pimsnerpopa} we get $H(\mathcal{A}_1|\mathcal{A})\geq \ln{\lVert A \rVert}^{2}.$ By Proposition 2.6.3 of \cite{W}, we see that $\lambda({\mathcal{A}}_1, \mathcal{A}) ={\lVert A \rVert}^{-2}.$ Thanks to Proposition 3.5 of \cite{PP}, we obtain $H({\mathcal{A}}_1|\mathcal{A})\leq -\ln \lambda({\mathcal{A}}_1, \mathcal{A})=\ln {\lVert A \rVert}^2.$ Thus, we conclude that $H({\mathcal{A}}_1| \mathcal{A})=\ln {\lVert A \rVert}^{-2}.$ \qed

\subsection*{Acknowledgements}
 This collaboration got initiated at the {\em Discussion meeting on
Linear Analysis\/} held at Evolve Back, Coorg, during Feb. 19-23,
2023. We thank the Indian Academy of Sciences, Bangalore for funding
this program. A part of this work was done when the first  author was visiting the second at Indian Statistical Institute, Bangalore. Bakshi thanks ISI Bangalore for kind hospitality.  Bakshi was supported through a DST INSPIRE faculty
grant  No. DST/INSPIRE/04/2019/002754. Bhat gratefully acknowledges
funding from  SERB, Anusandhan National Research Foundation (India)
through J C Bose Fellowship No. JBR/2021/000024.

\end{document}